\newtheorem{thm}{Theorem}[section]
\newtheorem{prp}[thm]{Proposition}
\newtheorem{cor}[thm]{Corollary}
\theoremstyle{definition}
\newtheorem{dfn}[thm]{Definition}
\theoremstyle{remark}
\newtheorem{rem}[thm]{Remark}
\newtheorem{notation}[thm]{Notation}
\numberwithin{equation}{section}
\DeclareMathOperator{\Ric}{Ric}
 \DeclareMathOperator{\trace}{tr}
\DeclareMathOperator{\sign}{sign}
\DeclareMathOperator{\diver}{div}
\newcommand{\2}{``}
\begin{document}
\thispagestyle{empty}
\title[Implications of Energy Conditions]
{Implications of Energy Conditions on Standard Static Space-times}
\author{Fernando Dobarro}
%
%
\address[F. Dobarro]{Dipartimento di Matematica e Informatica,
Universit\`{a} degli Studi di Trieste, Via Valerio 12/B, I-34127
Trieste, Italy} \email{dobarro@dmi.units.it}
\author{B\"{u}lent \"{U}nal}
\address[B. \"{U}nal]{Department of Mathematics, Bilkent University,
         Bilkent, 06800 Ankara, Turkey}
\email{bulentunal@mail.com} \keywords{Warped products, standard
static space-times, energy conditions, sub-harmonic functions,
scalar curvature map, conformal hyperbolicity, conjugate points.}
%
%
%
\subjclass{53C21, 53C50, 53C80}
\date{\today}

\begin{abstract}
%
In the framework of standard static space times, we state a family
of sufficient or necessary conditions for a set of physically
reasonable energy and convergence conditions in relativity and
related theories. We concentrate our study on questions about the
sub-harmonicity of the warping function, the scalar curvature map,
conformal hyperbolicity, conjugate points and the time-like
diameter of this class of space-times.
\end{abstract}

\maketitle

\tableofcontents

\newpage

\renewcommand{\thepage}{\arabic{page}}
\setcounter{page}{1}

\section{Introduction}

This paper deals with the study of energy conditions on standard
static space-times. Our first objective is to obtain a family of
necessary and/or sufficient conditions for a set of energy
conditions on standard static space-times. The second aim is to
apply the latter to a group of questions about conformal
hyperbolicity (in the sense of M. J. Markowitz) and the existence
of conjugate points in the same framework. Especially, we also pay
attention to a set of partial differential operators involved in
these discussions
\footnote{We would like to inform the reader that some of the
results provided in this article were previously announced in
\cite{Dobarro-Unal2008}.}.

Throughout our study, the warped product of manifolds is the
underlying central concept. The warped product of
pseudo-Riemannian manifolds were introduced in general relativity
as a method to find general solutions to Einstein's field
equations. Two important examples include generalized
Robertson-Walker space-times and standard static space-times. The
latter class can be regarded as a generalization of the Einstein
static universe.

\noindent We recall that a warped product can be defined as
follows \cite{BEE,ON}. Let $(B,g_B)$ and $(F,g_F)$ be
pseudo-Riemannian manifolds and also let $b \colon B \to
(0,\infty)$ be a smooth function. Then the (singly) \textit{warped
product}, $B \times {}_b F$ is the product manifold $B \times F$
furnished with the metric tensor $g=g_B \oplus b^{2}g_F$ defined
by
\begin{equation*}
    g=\pi^{\ast}(g_B) \oplus (b \circ \pi)^2 \sigma^{\ast}(g_F),
\end{equation*}
where $\pi \colon B \times F \to B$ and $\sigma \colon B \times F
\to F$ are the usual projection maps and ${}^\ast$~denotes the
pull-back operator on tensors.

A \textit{standard static space-time} (also called
\textit{globally static}, see \cite{KobayashiO-Obata1981}) is a
Lorentzian warped product where the warping function is defined on
a Riemannian manifold (called the natural space or Riemannian
part) and acting on the negative definite metric on an open
interval of real numbers. More precisely, a standard static
space-time, denoted by $I_f \times F$, is a Lorentzian warped
product furnished with the metric $g=-f^2{\rm d}t^2 \oplus g_F,$
where $(F,g_F)$ is a Riemannian manifold, $f \colon F \to
(0,\infty)$ is smooth and $I=(t_1,t_2)$ with $-\infty \leq t_1 <
t_2 \leq \infty$. In \cite{ON}, it was shown that any static
space-time
\footnote{An $n-$dimensional space-time $(M,g)$ is called
\textit{static} if there exists a nowhere vanishing time-like
Killing vector field $X$ on $M$ such that the distribution of each
$(n-1)-$plane orthogonal to $X$ is integrable (see
\cite[Subsection 3.7]{BEE} and also the general relativity texts
\cite{Hall2004,Hughston-Tod1990,Sachs-Wu1977}).}
is \textit{locally} isometric to a standard static space-time.

Standard static space-times have been previously studied by many
authors. O. Kobayashi and M. Obata \cite{KobayashiO-Obata1980}
stated the geodesic equation for this kind of space-times. The
causal structure and geodesic completeness were considered in
\cite{Allison1988-b}, where sufficient conditions on the warping
function for causal geodesic completeness of the standard static
space-time were obtained (see also \cite{RASM}).
%
%
The existence of geodesics in standard static space-times has been
studied by several authors. In \cite{mS2}, S\'{a}nchez gives a
good overview of geodesic connectedness in semi-Riemannian
manifolds, including a discussion for standard static space-times
(see also \cite{MS05,MS06}). The geodesic structure of standard
static space-times has been studied in \cite{GES} and conditions
are found which imply nonreturning and pseudoconvex geodesic
systems. As a consequence, it is shown that if the complete
Riemannian factor $F$ satisfies the nonreturning property and has
a pseudoconvex geodesic system and the warping function $f \colon
F \to (0,\infty)$ is bounded from above, then the standard static
spacetime $(a,b) _f\times F$ is geodesically connected. In
\cite{Dobarro-Unal2004}, some conditions for the Riemannian factor
and the warping function of a standard static space-time are
obtained in order to guarantee that no nontrivial warping function
on the Riemannian factor can make the standard static space-time
Einstein.

In general relativity and related theories, the energy conditions
are a set of physically reasonable imposed constraints to the
underlying space-time (for a deeper discussions about the several
energy conditions see
\cite{Carter2002,Frolov-Novikov1997,HE,ON,Visser1996,Wald1984}
among many others). In \cite{Allison1985,Allison1988-a} the author
investigates conditions on the warping function which guarantee
that a standard static space-times either satisfy or else fail to
satisfy certain energy or convergence conditions. Some part of our
results in the following sections are narrowly related to those of
D. Allison.

Now, we briefly mention some properties of  the Lorentzian
pseudo-distance $d_M$ on an $n(\geq 3)-$dimensional Lorenztian
manifold $(M,g)$ defined by M. J. Markowitz in \cite{MM1, MM2},
where the author follows the procedure developed by S. Kobayashi
in \cite{KobayashiS1970} (see also
\cite{KobayashiS1998,KobayashiS2005}). The Lorentzian
pseudo-distance $d_M$ depends only on the conformal class, that
is, it remains the same for all conformal metrics to $g$. It is
known that for strongly causal space-times, the causal structure
is equivalent to the conformal structure and hence this causes a
link between $d_M$ and the causal structure of $(M,g)$. At this
point, we want to emphasize that the classical result about the
equivalency of the causal structure and the conformal structure is
proven to be true even for distinguishing space-times (see
\cite{GPSe2}) but the same authors points out that the causal
structure and the conformal structure should not be identified
(see \cite{GPSe1}). A Lorentzian manifold $(M,g)$ is called
conformally hyperbolic if the Lorentzian pseudo-distance $d_M$
satisfies all the conditions of an arbitrary distance function. In
\cite[\textsc{Theorem 5.8}]{MM1}, it is proven that an $n(\geq
3)$-dimensional Lorentzian manifold $(M,g)$ is conformally
hyperbolic if it satisfies the null convergence condition and the
null generic condition. Hence, conformal hyperbolicity becomes a
natural property for physically realistic space-times. Moreover, a
conformally hyperbolic Lorentzian manifold is causally incomplete
in sense of Markowitz (see \cite[Proposition 3.3]{MM1}). Here, we
would like to have the attention of the reader to the difference
between the usual causal completeness (see \cite{BEE}) and causal
completeness \textit{in the sense of Markowitz} (see
\cite{MM1,MM2}) of a space-time. According to the former, every
causal geodesic must be defined on the set of all real numbers but
according to the latter, every \textit{null} geodesic must be
extended to arbitrary values of every \textit{projective
parameter}. On the other hand, a null geodesically complete
Lorentzian manifold satisfying the reverse null convergence
condition, namely ${\rm Ric}(\mathrm{v},\mathrm{v}) \leq 0$ for
any null tangent vector $\mathrm{v}$, has the trivial
pseudo-distance, i.e., $d_M \equiv 0$ (see \cite[Theorem
5.1]{MM1}).

\noindent By applying the results in \cite{MM1}, Markowitz studied
the conformal hyperbolicity of generalized Robertson-Walker
space-times in \cite{MM2}. He also computed explicitly the
Lorentzian pseudo-distance $d_M$ on the Einstein-de Sitter
space-time by conformally imbedding this space-time into Minkowski
space.


\bigskip


After the previous  brief description of the generic scenario for
our study, we now provide an outline of the paper.

In {\it Section \ref{sec:preliminaries}}, we recall some
definitions and suitable expressions of Ricci and scalar
curvatures for a standard static space-time.

\noindent We also recall some topics in the study of the scalar
curvature map $\tau: g_F \mapsto \tau_{g_F}$, where $g_F$ is a
Riemannian metric on a given manifold $F$ and $\tau_{g_F}$ denotes
the associated scalar curvature of $(F,g_F)$. In the analysis of
$\tau $ it is usually useful to consider the linearization of this
map, i.e., $\mathscr{L}_{g_F}h = d \tau(g_F+th)/dt|_{t=0}$ . It
turns out that $\mathscr{L}_{g_F}$ is elliptic and the study of
the kernel of its formal adjoint $\mathscr{L}_{g_F}^\ast f
$ plays a central role to analyze the surjectivity of
$\mathscr{L}_{g_F}.$ In the 70's Bourguignon and Fischer-Marsden
showed that: if $(F, g_F)$ is a compact manifold, a necessary
condition for a nontrivial kernel of $\mathscr{L}_{g_F}^\ast$ is
that $\tau_{g_F}$ be a nonnegative constant.

\noindent On the other hand, for a given smooth function $f \colon
F \to (0,\infty)$, we introduce the $2-$covariant tensor
\begin{equation*}\label{}
Q_{g_F}^f:=\Delta_{g_F} f \, g_F- H_{g_F}^f,
\end{equation*}
where $(F,g_F)$ is a Riemannian manifold and $\Delta_{g_F}
(\cdot)= g_F^{ij} \nabla^{g_F}_i\nabla^{g_F}_j (\cdot)$ and
$H_{g_F}$ are the associated Laplace-Beltrami operator and Hessian
tensor, respectively. We consider also the Ricci tensor of
$(F,g_F)$, denoted by $\Ric_{g_F}$.

\noindent The tensors $\mathscr{L}_{g_F}^\ast f$, $Q_{g_F}^f$,
$\Ric_{g_F}$ and their associated quadratic forms play a central
role in the next two sections.

In \textit{Section \ref{sec:ECandI}}, after recalling a set of
energy and convergence conditions relevant in general relativity,
we obtain a family of necessary and/or sufficient conditions for
them
on a standard static space-time. In \textit{Subsection
\ref{subsec:energy conditions}} we introduce the energy and
convergence conditions and make some generic comments about them.
In the first part of \textit{Subsection \ref{subsec:liouville}},
we state a family of results that show up a connection among the
strong energy condition and a family of Liouville type results for
subharmonic functions on the Riemannian part of a standard static
space-time (see \textit{Theorem \ref{thm:te-c1}} and its
\textit{corollaries}). In the second part, the principal results
are \textit{Theorems \ref{thm:ec-t}, \ref{thm:ec-w},
\ref{thm:ec-w2}, \ref{thm:skid}} and their corollaries, notice
particularly \textit{Corollary \ref{cor:wec-1}}.
All these results are based on suitable hypothesis for the
definiteness of the quadratic forms associated to the above
mentioned tensors $\mathscr{L}_{g_F}^\ast f$, $Q_{g_F}^f$ and
$\Ric_{g_F}$.

\noindent At the end of this subsection, in \textit{Theorem
\ref{thm:einstein-corvino}}, we give a \2partial" extension of the
Bourguignon/Fischer-Marsden result mentioned above to the case
where the involved manifold is complete but noncompact and with
nonnegative Ricci curvature.

In \textit{Section \ref{sec:ConseqExamp}}, we combine the results
in \textit{Section \ref{sec:ECandI}} (particularly,
\textit{Theorem \ref{thm:ec-t}}) with the results of Markowitz.

\noindent In {\it Subsection \ref{subsec:ConfHyper}}, we obtain
sufficient conditions for a standard static space-time to be
conformally hyperbolic. In brief, we obtain that if the quadratic
forms associated to the Ricci tensor on the natural space and
$Q_{g_F}^f$ are positive semi-definite and positive definite,
respectively, then a standard static space-time of the form
$I_f\times F$ is conformally hyperbolic (see {\it Theorem
\ref{main-3}}).

\noindent In {\it Subsection \ref{subsec:ConjugatePoints}}, we
establish sufficient conditions in order to guarantee that any
causal geodesic on a standard static space-time has a pair of
conjugate points. By a stronger set of hypothesis, we also obtain
an estimate from above for the time-like diameter of the standard
static space-time (see {\it Corollaries \ref{cor-1} and
\ref{cor-3}}).

\noindent Finally, in {\it Subsection \ref{subsec:special
examples}}, we show some results connecting the tensor
$Q_{g_F}^f$, conformal hyperbolicity, concircular scalar fields
and Hessian manifolds. More precisely, we give sufficient
conditions for a standard static space-time to be conformally
hyperbolic where the Riemannian part admits a concircular scalar
field or is a global Hessian manifold.

\section{Preliminaries}\label{sec:preliminaries}

Throughout the article $I$ will denote an open real interval of
the form $I=(t_1,t_2)$ where $-\infty \leq t_1 < t_2 \leq \infty.$
Moreover, $(F,g_F)$ will denote a connected Riemannian manifold
without boundary with $\dim F=s$. Finally, on an arbitrary
differentiable manifold $N$, $C^{\infty}_{>0}(N)$ denotes the set
of all strictly positive $C^{\infty}$ functions defined on $N$,
$TN = \bigcup_{p \in N} T_p N$ denotes the tangent bundle of $N$
and $\mathfrak X(N)$ will denote the $C^{\infty}(N)-$module of
smooth vector fields on $N.$
\footnote{Notation: In the present study, manifolds are denoted by
$B,F,M,N$; points by $p,q,x,y$; vectors by $\mathrm{v},\mathrm{w},
\mathrm{x}, \mathrm{y}$ and also vector fields by $V,W,X,Y$.}

\begin{dfn} \label{def:sss-t} Let $f \in C^{\infty}_{>0}(F)$.
The $n(=1+s)-$dimensional product manifold $I \times F$ furnished
with the metric tensor $g=-f^2{\rm d}t^2 \oplus g_F$ is called a
\textit{standard static space-time} (also usually called
\textit{globally static}, see \cite{KobayashiO-Obata1981}) and is
denoted by $ I _f\times F$.
\end{dfn}

\begin{dfn} \label{def:RW} Let $b \in C^{\infty}_{>0}(I)$.
The $n(=1+s)-$dimensional product manifold $I \times F$ endowed
with the metric tensor $g=-{\rm d}t^2 \oplus b^2 g_F$ is called a
\textit{generalized Robertson-Walker space-time} and following the
warped product notation is denoted by $I \times_b F$.
\end{dfn}


On a warped product of the form $B \times_f F$, we will denote the
set of lifts to the product by the corresponding projection of the
vector fields in $\mathfrak X (B)$ (respectively, $\mathfrak X
(F)$) by $\mathfrak L (B)$ (respectively, $\mathfrak L (F)$) (see
\cite{ON}). We will use the same symbol for a tensor field and its
lift.
\medskip

The following formula of the curvature Ricci tensor can be easily
obtained from \cite{BEE,Dobarro-Unal2004,CMP,ON}.


\begin{prp} \label{prp:ricci-ssst} Let $I _f\times F$ be a standard static
space-time furnished with the metric $g=-f^2 {\rm d}t^2 \oplus
g_F.$ Suppose that $U_1,U_2 \in \mathfrak L (I)$ and $V_1,V_2 \in
\mathfrak L (F)$. If ${\rm Ric}$ and ${\rm Ric}_{g_F}$ denote the
Ricci tensors of $I _f\times F$ and $(F,g_F)$, respectively, then
%
%
\begin{equation}\label{eq:ricci-ssst}
    \begin{split}
   {\rm \Ric} &(U_1+V_1, U_2+V_2 )\\
   &=\displaystyle {\rm Ric}_{g_F}(V_1,V_2)
   + f \Delta_{g_F} f \, {\rm d}t^2(U_1,U_2) - \frac{1}{f}{\rm H}_{g_F}^f(V_1,V_2) \\
   &=\displaystyle -\frac{1}{f} \mathscr{L}_{g_F}^\ast f (V_1,V_2)
   -\frac{1}{f} \Delta_{g_F} f \, g(U_1+V_1, U_2+V_2)
    \end{split}
\end{equation}
%
%
%
where ${\rm H}_{g_F}^f$ is the Hessian tensor of $f$ on $(F,g_F)$
and $\mathscr{L}_{g_F}^\ast f$ is the $2-covariant$ tensor given by
\begin{equation}\label{eq:adjoint-lsc}
    \displaystyle \mathscr{L}_{g_F}^\ast f := -f \, {\rm Ric}_{g_F}-
\Delta_{g_F} f \, g_F +{\rm H}_{g_F}^f
\end{equation}
on $(F,g_F)$.
\end{prp}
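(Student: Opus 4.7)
The plan is to recognise $I_{f}\times F$ as a warped product in the sense of O'Neill, but with the roles of base and fibre exchanged relative to what the notation suggests. Specifically, the metric $-f^{2}\,{\rm d}t^{2}\oplus g_{F}$ coincides with the warped product metric on $F\times_{f}I$ where the base is $(F,g_{F})$, the fibre is $(I,-{\rm d}t^{2})$, the warping function is $f\in C^{\infty}_{>0}(F)$, and the fibre dimension equals $1$. Under the natural identifications, the lifts $\mathfrak L(F)$ become horizontal (base) vector fields and the lifts $\mathfrak L(I)$ become vertical (fibre) vector fields.

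With this viewpoint fixed, I would apply O'Neill's warped product Ricci formulas as stated in \cite[Ch.~7]{ON} (see also \cite{BEE,CMP}). For horizontal $V_{1},V_{2}\in\mathfrak L(F)$ one obtains
\begin{equation*}
\Ric(V_{1},V_{2})=\Ric_{g_{F}}(V_{1},V_{2})-\frac{1}{f}H_{g_{F}}^{f}(V_{1},V_{2}),
\end{equation*}
the coefficient $1$ being the fibre dimension. For mixed horizontal/vertical arguments, the warped product identity gives $\Ric(U,V)=0$. For vertical $U_{1},U_{2}\in\mathfrak L(I)$, O'Neill's formula reads
\begin{equation*}
\Ric(U_{1},U_{2})=\Ric_{-{\rm d}t^{2}}(U_{1},U_{2})-g(U_{1},U_{2})\!\left[\frac{\Delta_{g_{F}}f}{f}+(d-1)\frac{g_{F}(\nabla^{g_{F}}f,\nabla^{g_{F}}f)}{f^{2}}\right],
\end{equation*}
and the two simplifications $\Ric_{-{\rm d}t^{2}}\equiv 0$ (the fibre is one-dimensional) and $d-1=0$ reduce this, via $g(U_{1},U_{2})=-f^{2}\,{\rm d}t^{2}(U_{1},U_{2})$, to $\Ric(U_{1},U_{2})=f\Delta_{g_{F}}f\,{\rm d}t^{2}(U_{1},U_{2})$. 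Adding the three contributions by bilinearity of $\Ric$ yields the first equality in \eqref{eq:ricci-ssst}.

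For the second equality I would proceed purely algebraically. Substituting the definition \eqref{eq:adjoint-lsc} of $\mathscr{L}_{g_{F}}^{\ast}f$ gives
\begin{equation*}
-\frac{1}{f}\mathscr{L}_{g_{F}}^{\ast}f(V_{1},V_{2})=\Ric_{g_{F}}(V_{1},V_{2})+\frac{1}{f}\Delta_{g_{F}}f\,g_{F}(V_{1},V_{2})-\frac{1}{f}H_{g_{F}}^{f}(V_{1},V_{2}),
\end{equation*}
while the decomposition $g(U_{1}+V_{1},U_{2}+V_{2})=-f^{2}\,{\rm d}t^{2}(U_{1},U_{2})+g_{F}(V_{1},V_{2})$ gives
\begin{equation*}
-\frac{1}{f}\Delta_{g_{F}}f\,g(U_{1}+V_{1},U_{2}+V_{2})=f\Delta_{g_{F}}f\,{\rm d}t^{2}(U_{1},U_{2})-\frac{1}{f}\Delta_{g_{F}}f\,g_{F}(V_{1},V_{2}).
\end{equation*}
Adding these two lines the $\tfrac{1}{f}\Delta_{g_{F}}f\,g_{F}(V_{1},V_{2})$ terms cancel, and what remains is exactly the first expression in \eqref{eq:ricci-ssst}.

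There is no real obstacle in this proof; the only subtle point is the bookkeeping forced by the convention: although the space-time is written $I_{f}\times F$, the warping function lives on the Riemannian factor, so one must apply the warped product formulas with $F$ in the role of the base and a one-dimensional negative-definite fibre. Once that identification is made, everything is either O'Neill's formula or linear algebra.
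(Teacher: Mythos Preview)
Your proof is correct and is precisely the approach the paper has in mind: the paper does not give a proof but simply states that the formula ``can be easily obtained from \cite{BEE,Dobarro-Unal2004,CMP,ON}'', and your argument is exactly the O'Neill warped-product computation (with $F$ as base and the one-dimensional fibre $(I,-{\rm d}t^{2})$) those references supply, followed by the same algebraic rewriting in terms of $\mathscr{L}_{g_F}^{\ast}f$.
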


In \cite{Dobarro-Unal2004}, we studied the problem of constancy of
the scalar curvature on a standard static space-time, in
particular we obtained the following formula.

\begin{prp} \label{prp:sc-ssst} Let $I _f\times F$ be a standard static
space-time furnished with the metric $g=-f^2 {\rm d}t^2 \oplus
g_F.$ If $\tau$ and $\tau_{g_F}$ denote the scalar curvatures of
of $I _f\times F$ and $(F,g_F)$, respectively, then
$$ \tau = \tau_{g_F} -2 \frac{1}{f}\Delta_{g_F} f.$$
As a consequence $f$ is sub-harmonic (i.e., $\Delta_{g_F} f \geq 0$)
if and only if $\tau < \tau_{g_F}$ (since $f \in C^{\infty}_{>0}(F)$).
\end{prp}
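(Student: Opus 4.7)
The plan is to compute the scalar curvature $\tau$ of $I_f \times F$ as the $g$-trace of its Ricci tensor and substitute the formula from \textit{Proposition \ref{prp:ricci-ssst}}. The trace will split cleanly into a timelike contribution and a spatial contribution once I work in an orthonormal frame adapted to the warped-product structure.

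In detail, around an arbitrary point $(t_0,p) \in I_f \times F$, let $\{E_1,\ldots,E_s\}$ be a local $g_F$-orthonormal frame on $F$ (lifted to the product), and set $E_0 := \frac{1}{f}\partial_t$, so that $g(E_0,E_0)=-1$ and $\{E_0,E_1,\ldots,E_s\}$ is $g$-orthonormal on a neighborhood of $(t_0,p)$. Then
\[
\tau \;=\; -\Ric(E_0,E_0) + \sum_{i=1}^{s} \Ric(E_i,E_i).
\]
Applying the first line of \eqref{eq:ricci-ssst} to the spatial directions gives $\Ric(E_i,E_i) = \Ric_{g_F}(E_i,E_i) - \frac{1}{f} H_{g_F}^f(E_i,E_i)$, and summing over $i$ produces $\tau_{g_F} - \frac{1}{f}\Delta_{g_F} f$, since the $g_F$-trace of the Hessian is exactly $\Delta_{g_F} f$. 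For the timelike piece, the same formula yields $\Ric(E_0,E_0) = f\,\Delta_{g_F} f\cdot dt^2(E_0,E_0) = \frac{1}{f}\Delta_{g_F} f$, using $dt(E_0) = 1/f$. Combining the two contributions gives $\tau = \tau_{g_F} - \frac{2}{f}\Delta_{g_F} f$, as claimed.

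The stated consequence is then immediate: since $f > 0$ everywhere, the identity directly yields the equivalence between subharmonicity of $f$ and $\tau \leq \tau_{g_F}$ (together with the analogous strict version). The computation has no substantive obstacle; the only step requiring slight care is correctly evaluating $dt^2(E_0,E_0) = 1/f^2$ so that the factor of $f$ in front of $\Delta_{g_F} f$ cancels one inverse power and the timelike and spatial pieces combine with the correct sign.
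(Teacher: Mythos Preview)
Your proof is correct. The paper does not give an in-text proof of this proposition (it is quoted from \cite{Dobarro-Unal2004}), but your direct trace of the Ricci formula in \textit{Proposition~\ref{prp:ricci-ssst}} using an adapted orthonormal frame is the natural argument and is carried out accurately; your remark that the identity yields $\Delta_{g_F} f \geq 0 \iff \tau \leq \tau_{g_F}$ (rather than a strict inequality) is also well taken.
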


Now, we will introduce a special tensor frequently used to
establish energy conditions on a standard static space-time (see
for instance \cite{Allison1985,Allison1988-a}). This tensor will
be play a central role in the next sections.

\begin{notation} \label{notation: quadratic form}
Let $(F,g_F)$ be a Riemannian manifold and $f \in
C^{\infty}_{>0}(F)$. From now on, $Q_{g_F}^f$ denotes the
$2-$covariant tensor given by
\begin{equation}\label{eq: quadratic form 1}
    Q_{g_F}^f:=\Delta_{g_F} f \, g_F- H_{g_F}^f.
\end{equation}
If it is necessary, we will emphasize the evaluation points $p \in
F$ by writing $Q_{g_F|p}^f$. We will apply the latter convention
for other tensors too.

Furthermore, we will denote its associated quadratic form by
$\mathcal{Q}_{g_F}^f$ for any point $p \in F$. More precisely, for
any $\mathrm{v} \in {\rm T}_p F$
\begin{equation}\label{eq:quadratic form 2}
    \mathcal{Q}_{g_F|p}^f(\mathrm{v}):=Q_{g_F|p}^f(\mathrm{v},\mathrm{v}),
\end{equation}
where ${\rm T}_p F$ is the tangent space to $F$ at $p$.

Analogously, we will denote the associated quadratic form to the
Ricci tensor $\Ric$ by $\mathcal{R}ic$. So, for any \textit{unit}
tangent vector $\mathrm{v} \in {\rm T}_p F$
\begin{equation}\label{eq:Ricci quadratic form 2}
    \mathcal{R}ic_{g_F|p}(\mathrm{v}):=\Ric_{g_F|p}(\mathrm{v},\mathrm{v})
\end{equation}
is so called the Ricci curvature in the direction of $\mathrm{v}$
(see \cite{Aubin98,Berard1986,BerardBergery1980})).

Besides, we will denote the associated quadratic form to the $2$
covariant tensor $\mathscr{L}_{g_F}^\ast f$ by
$\mathcal{L}_{g_F}^\ast f $.
\end{notation}

\begin{rem}\label{rem:quadratic form1}
\footnote{ We observe that this result is valid on any
pseudo-Riemannian manifold if one takes the signature of the
metric in the definition of the divergence into account.}
It is easy to prove that the 2-covariant tensor $Q_{g_F}^f$ is
\textit{divergence-free}. Indeed, by definition of the involved
differential operators, the identity stated as $\diver_{g_F}
(\phi g_F) = \mathrm{d} \phi$ for all $\phi \in C^{\infty}(F)$
and commuting derivatives (see \cite[p. 85-87]{ON}),
\begin{equation}\label{eq:Qfdiv-free}
    \begin{split}
    (\diver_{g_F} Q_{g_F}^f)_j &= (\diver_{g_F} ((\Delta_{g_F} f) \, g_F) - \diver_{g_F} H_{g_F}^f)_j\\
                 &= (\mathrm{d} (\Delta_{g_F} f) - \diver_{g_F} H_{g_F}^f)_j\\
                 &= \nabla_j \nabla^k \nabla_k f  - \nabla^k \nabla_j
                 \nabla_k f\\
                 &= \nabla_j \nabla^k \nabla_k f  - \nabla_j \nabla^k
                 \nabla_k f\\
                 &=0,
    \end{split}
\end{equation}
for any $j$, where $\nabla=\nabla^{g_F}$ is the Levi-Civita
connection associated to $(F,g_F)$.
%
\end{rem}

\begin{rem} Recall that if $f$ is convex (or concave, respectively),
i.e., ${\rm H}_{g_F}^f(\mathrm{v},\mathrm{v}) \geq 0$ for any
$\mathrm{v}\in TF$ (or ${\rm H}_{g_F}^f(\mathrm{v},\mathrm{v})
\leq 0$ for any $\mathrm{v} \in TF$, respectively) then
$\Delta_{g_F} f \geq 0$ (or $\Delta_{g_F} f \leq 0,$
respectively).
\end{rem}

\begin{rem}\label{rem:quadratic form}
Let $Q_{g_F}^f$ be defined as above with $\dim F = s \geq 2$.
\begin{itemize}
    \item[{\bf (i)}] If the symmetric bilinear form $Q_{g_F|p}^f$
    is positive (respectively, negative) semi-definite for a
    point $p \in F,$ then $(\Delta_{g_F} f)_{|p} \ge 0$ (respectively,
    $\le 0$). It is sufficient to observe that $\trace_{g_F}
    Q_{g_F}^f = (s-1)\Delta_{g_F} f$ and the $g_F-$trace preserves the
    sign.
    \item[{\bf (ii)}] It is clear that \textbf{(i)} holds while
    replacing semi-definiteness by definiteness and accordingly,
    $\ge$ by $>$.
    \item[{\bf (iii)}] Assume now that $F$ is compact and $f
    \colon F \to \mathbb R$ is smooth. The so called Bochner's
    Lemma (see \cite[p. 39]{Yano1970}) says: if $\Delta_{g_F} f \geq 0$,
    then $f$ is constant. Thus, it just follows from
    \textbf{(ii)} that $f$ is constant if $Q_{g_F}^f$ is positive
    semi-definite. \\
    Furthermore, if $Q_{g_F}^f$ is negative semi-definite, then
    $Q_{g_F}^{-f}=-Q_{g_F}^f$ is positive semi-definite, so $-f$
    (and obviously $f$) is constant.
\end{itemize}
\end{rem}

\begin{rem}\label{rem:adjointsc}
Notice that applying \eqref{eq: quadratic form 1}, the
$(0,2)-$tensor $\mathscr{L}_{g_F}^\ast f$ defined in
\eqref{eq:adjoint-lsc} takes the form
\begin{equation}\label{eq:adjoint-lsc-Q}
    \displaystyle \mathscr{L}_{g_F}^\ast f := -f \, {\rm Ric}_{g_F}- {\rm Q}_{g_F}^f.
\end{equation}
This tensor is strongly associated to the scalar curvature map
between Banach manifolds that apply to each metric on a smooth
manifold (eventually, to a sub-domain) the corresponding scalar
curvature. Indeed, if $g_F$ is a suitable Riemannian metric on
such a manifold and $f$ is a sufficiently regular function on the
manifold, $\mathscr{L}_{g_F}^\ast f$ is the $L^2-$formal adjoint
of the linearized scalar curvature operator. There is a large
literature about these operators, for instance
\cite{Lichnerowicz1961,Fischer-Marsden1974,Marsden1974,Fischer-Marsden1975-a,
Fischer-Marsden1975-b,Bourguignon1975,
KobayashiO-Obata1980,KobayashiO-Obata1981,KobayashiO1982,Lafontaine1983,
Mazzeo-Pollack-Uhlenbeck1994,
Corvino2000,Hwang2000,Lafontaine-Rozoy2000,Hwang2003,Corvino-Schoen2006,Lafontaine2008}
and the many references therein.

The kernel of the operator \eqref{eq:adjoint-lsc-Q} will play an
important role in our discussion about the dominant energy
condition in the following section.
\end{rem}

\section{Energy Conditions and Implications}\label{sec:ECandI}

There are several natural energy conditions considered in general
relativity and cosmology questions (see
\cite{BEE,Frolov-Novikov1997,HE,ON,Visser1996,Wald1984} among
others).
They are related for instance with: space-time singularities (see
\cite{Tipler1}), existence of conjugate points (see
\cite{BordeA}), splitting theorems and existence of time-like
lines (see \cite{Esh-Galo}), Lorentzian wormholes (see
\cite{Visser1996}), higher dimensional black-holes (see
\cite{Cai-Galloway2001,Emparan-Reall2008,Galloway-Schoen2006}),
spacelike foliations (see \cite{Nardmann2007}), dS/CFT and
space-time topology (see \cite{Andersson-Galloway2002}) among many
other topics.

In the first subsection we will introduce some energy conditions
on generic space-times, while in the second subsection we will
state some of these for a standard static space-time of the form
$M=I _f\times F$ with the metric $g=-f^2{\rm d}t^2 \oplus g_F$ in
terms of the tensors $\mathscr{L}_{g_F}^\ast f$ and $Q_{g_F}^f$
and analyze possible consequences of these conditions.

\subsection{Definitions, Generalities and more}
\label{subsec:energy conditions} In this subsection, let us
consider a space-time $(N,g_N)$ of dimension $n \geq 3$.

Assuming the conventions in \cite{Allison1988-a, Frankel1979,
Lee1975}; the space-time $(N,g_N)$ is said to satisfy the
\emph{strong energy condition} (respectively, \emph{reverse strong
energy condition}), briefly SEC (respectively, RSEC), if ${\rm
Ric}(\mathrm{x},\mathrm{x}) \geq 0$ (respectively, ${\rm
Ric}(\mathrm{x},\mathrm{x}) \leq 0$) for all causal tangent
vectors $\mathrm{x}$.

Furthermore $(N,g_N)$ is said to satisfy the \emph{time-like}
(respectively \emph{null, space-like}) \emph{convergence
condition}, briefly TCC (respectively NCC, SCC), if ${\rm
Ric}(\mathrm{x},\mathrm{x}) \geq 0$ for all time-like
(respectively null, space-like) tangent vectors $\mathrm{x}$. The
corresponding \emph{reverses} (i.e., ${\rm
Ric}(\mathrm{x},\mathrm{x}) \leq 0$) will be denoted as above by
RTCC, RNCC and RSCC, respectively.

Notice that the SEC implies the NCC. Furthermore the TCC is
equivalent to the SEC, by continuity. The actual difference
between TCC and SEC follows from the fact that while TCC is just a
geometric condition imposed on the Ricci tensor, SEC is a
condition on the stress-energy tensor. They can be considered
equivalent due to the Einstein equation (see \eqref{eq: Einstein
eq} and the discussion about the SEC definition adopted by us and
those in the sense of Hawking and Ellis).
\footnote{One can refer \cite[p. 434]{BEE} for further interesting
comments about the different definitions and conventions about these
energy and convergence conditions.}

Moreover, a space-time is said to satisfy the \emph{weak energy
condition}, briefly WEC, if ${\rm T}(\mathrm{x},\mathrm{x}) \geq
0$ for all time-like vectors, where ${\rm T}$ is the
energy-momentum tensor, which is determined by physical
considerations.

In this article, when we consider the energy-momentum tensor, we
assume that Einstein's equation holds (see \cite{HE,ON}). More
explicitly,
\begin{equation}\label{eq: Einstein eq}
\Ric -\frac{1}{2} \tau g_N = 8 \pi {\rm T}.
\end{equation}
Notice that in particular, \eqref{eq: Einstein eq}
gives the explicit form of the energy-momentum tensor
$\mathrm{T}$.

The WEC has many applications in general relativity theory such as
nonexistence of closed time-like curve (see \cite{ChoPark}) and
the problem of causality violation (\cite{OriSoen}). But its
fundamental usage still lies in Penrose's Singularity theorem (see
\cite{Pen1}).

Notice that the notion of SEC considered in our study is not the
same one as stated in \cite[p. 95]{HE} (see also \cite[p. 434]{BEE},
\cite{Beem-Ehrlich1978}). Indeed for Hawking and Ellis, $(N,g_N)$
verifies the strong energy condition (briefly HE-SEC) if and only
if
\begin{equation}\label{eq:HE-SEC}
    \mathrm{T}(\mathrm{x},\mathrm{x}) \geq \frac{1}{2}\trace \mathrm{T}\,g_N(\mathrm{x},\mathrm{x}),
\end{equation}
for all time-like tangent vectors $\mathrm{x}$.

On the other hand, by \eqref{eq: Einstein eq} is
\begin{equation*}\label{eq:}
\frac{2-n}{2}\tau = 8 \pi \trace\mathrm{T}.
\end{equation*}
Thus
\begin{equation}\label{eq:Ricci-T}
    \Ric = 8 \pi \left[\mathrm{T} - \frac{1}{n-2}\trace \mathrm{T}\,
    g_N\right].
%
%
\end{equation}
So the HE-SEC is equivalent to the SEC, if and only if $n=4$.
Furthermore,
\begin{itemize}
    \item if $n > 4$ and $\tau \geq 0$, HE-SEC implies SEC
    \item if $n > 4$ and $\tau \leq 0$, SEC implies HE-SEC
    \item if $n \geq 3$ and $\tau = 0$, then SEC, HE-SEC and WEC are equivalent.
\end{itemize}

At this point we immediately observe that neither SEC nor HE-SEC implies
WEC (see \cite[p. 117]{Visser1996} for dimension $4$).
However,\begin{itemize}
    \item SEC + `` $\tau \geq 0$ " implies WEC
    \item WEC + `` $\tau \leq 0$ " implies SEC
\end{itemize}
Consequently,
\begin{itemize}
    \item if $\tau = 0$, SEC is equivalent to WEC.
\end{itemize}
Other important energy condition is the so called \textit{dominant
energy condition} (briefly DEC), namely: WEC + `` $\mathrm{T}^{ij}
\mathrm{x}_{j}$
is causal for all time-like vectors $\mathrm{x}$ ". It is clear
that DEC implies WEC. This condition is extremely relevant, for
instance in the recent studies of higher dimensional black holes
\cite{Cai-Galloway2001,Emparan-Reall2008,Galloway-Schoen2006} (see
also \cite{Carter2002}).

\noindent An equivalent formulation of the DEC is (see
\cite{Galloway-Schoen2006,Gibbons2003,Andersson-Galloway2002}):
%
%
``$\,\mathrm{T} (\mathrm{x}, \mathrm{y} )
\geq 0$ for all future pointing causal vectors $\mathrm{x}$ and
$\mathrm{y}\,$".


\bigskip

\subsection{Some implications on standard static space-times}
\label{subsec:liouville}

Let $M$$=I _f\times F$ be a standard static space-time with the
metric $g=-f^2{\rm d}t^2 \oplus g_F$. Thus,
\begin{equation}\label{eq:timelike partialt}
    g(\partial_t,\partial_t)= -f^2 <0
\end{equation}
i.e., $\partial_t$ is time-like and by Proposition
\ref{prp:ricci-ssst},
\begin{equation}\label{eq:ricci partialt ssst}
\Ric (\partial_t,\partial_t)= f \Delta_{g_F} f.
\end{equation}
So we can easily state the following.
\begin{thm} \label{thm:te-c1} \cite{Allison1988-a} Let
$M=I _f\times F$ be a standard static space-time with the metric
$g=-f^2{\rm d}t^2 \oplus g_F.$ If the space-time $(M,g)$ satisfies
the SEC (or equivalently, the TCC), then $f$ is subharmonic, i.e., $\Delta_{g_F} f \geq 0$.
\end{thm}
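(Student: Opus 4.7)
The plan is to specialize the convergence condition to a single well-chosen causal direction and then use positivity of the warping function to extract the subharmonicity.

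First I would recall, from the Preliminaries, two ingredients that are already in hand. From \eqref{eq:timelike partialt}, the coordinate vector field $\partial_t$ is time-like everywhere, hence in particular causal. From \eqref{eq:ricci partialt ssst}, specializing \textit{Proposition \ref{prp:ricci-ssst}} with $U_1 = U_2 = \partial_t$ and $V_1 = V_2 = 0$, we have the pointwise identity
\begin{equation*}
\Ric(\partial_t,\partial_t) = f\,\Delta_{g_F} f
\end{equation*}
at every point $(t,p) \in I \times F$.

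Next I would invoke the hypothesis. By definition, SEC means $\Ric(\mathrm{x},\mathrm{x}) \geq 0$ for every causal tangent vector $\mathrm{x}$, and as noted in \textit{Subsection \ref{subsec:energy conditions}} this is equivalent to TCC. Applying this at an arbitrary point $(t,p)$ to $\mathrm{x} = \partial_t|_{(t,p)}$ yields $f(p)\,\Delta_{g_F} f(p) \geq 0$. Since $f \in C^{\infty}_{>0}(F)$, dividing by $f(p) > 0$ gives $\Delta_{g_F} f(p) \geq 0$. As $p \in F$ was arbitrary, $f$ is subharmonic on $(F,g_F)$.

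There is essentially no obstacle: the only small point worth flagging is that the equivalence of SEC and TCC used here is the one discussed in the paragraph following the definitions in \textit{Subsection \ref{subsec:energy conditions}} (continuity from time-like to causal vectors), so it suffices to test the Ricci inequality on the time-like direction $\partial_t$. The result is purely a pointwise algebraic consequence of \eqref{eq:ricci partialt ssst} combined with $f>0$.
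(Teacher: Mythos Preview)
Your proof is correct and matches the paper's own reasoning essentially line for line: the paper derives \eqref{eq:timelike partialt} and \eqref{eq:ricci partialt ssst} immediately before stating the theorem and then says ``So we can easily state the following,'' which is precisely the argument you wrote out. There is nothing to add.
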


In particular, we have the followings:

\begin{cor}\label{cor:te-c1-bis}Let $(F,g_F)$ be
admitting no nonconstant subharmonic positive function. Then, the
SEC is verified by no standard static space-time with \2natural
space $(F,g_F)$ and nonconstant warping function".
\end{cor}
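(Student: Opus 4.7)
The plan is to argue by contraposition, applying \textit{Theorem \ref{thm:te-c1}} directly. Suppose, toward a contradiction, that there exists a standard static space-time $M = I_f \times F$ with natural space $(F, g_F)$ and \emph{nonconstant} warping function $f$ such that $(M,g)$ satisfies the SEC. By \textit{Theorem \ref{thm:te-c1}}, this forces $\Delta_{g_F} f \geq 0$, that is, $f$ is subharmonic on $(F, g_F)$.

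Now I would invoke the standing assumption on the warping function in \textit{Definition \ref{def:sss-t}}, namely $f \in C^\infty_{>0}(F)$. Combining this positivity with the subharmonicity just derived and the assumption that $f$ is nonconstant, we obtain a nonconstant subharmonic positive function on $(F, g_F)$, contradicting the hypothesis on $(F, g_F)$. This contradiction completes the proof.

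There is no genuine obstacle here: the statement is a one-step logical contrapositive of \textit{Theorem \ref{thm:te-c1}}, together with the positivity condition built into the definition of a standard static space-time. The only thing that requires a moment of care is to be explicit that \2nonconstant warping function" in the statement of the corollary matches exactly the \2nonconstant" clause required to produce the forbidden positive subharmonic function on $(F, g_F)$.
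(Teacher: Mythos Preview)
Your proof is correct and matches the paper's intent: the corollary is stated immediately after \textit{Theorem \ref{thm:te-c1}} with no separate proof, as an obvious contrapositive consequence. Your argument is exactly the one the authors leave implicit.
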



\begin{cor} \label{cor:sec-compact} Let $M=I _f\times F$ be a
standard static space-time with the metric $g=-f^2{\rm d}t^2
\oplus g_F$ satisfying the SEC (or equivalently, the TCC).
If $(F,g_F)$ is compact, then $f$ is a positive constant.
\end{cor}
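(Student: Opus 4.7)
The plan is very short because essentially all the work has been done already. By Theorem \ref{thm:te-c1}, the SEC hypothesis on the standard static space-time $M = I_f \times F$ implies that the warping function $f$ is subharmonic on the Riemannian factor, i.e., $\Delta_{g_F} f \geq 0$ everywhere on $F$. So the task reduces to showing that a nonnegative smooth function on $F$ whose Laplacian is nonnegative must be constant, under the extra assumption that $F$ is compact.

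For this I would invoke Bochner's Lemma, exactly as recalled in Remark \ref{rem:quadratic form}\textbf{(iii)}: on a compact Riemannian manifold (without boundary, which is part of the blanket assumption on $F$ stated in Section \ref{sec:preliminaries}), any smooth function $u$ satisfying $\Delta_{g_F} u \geq 0$ is necessarily constant. Applied to $u = f$, this gives that $f$ is constant on $F$, and since $f \in C^\infty_{>0}(F)$ by hypothesis, the constant is strictly positive.

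If one prefers a self-contained one-line justification rather than a reference, the same conclusion follows from the divergence theorem on the closed manifold $F$: one has
\begin{equation*}
0 = \int_F \Delta_{g_F} f \; dV_{g_F},
\end{equation*}
which combined with $\Delta_{g_F} f \geq 0$ forces $\Delta_{g_F} f \equiv 0$, hence $f$ is harmonic on a compact connected Riemannian manifold and therefore constant. There is no real obstacle: the only thing to verify is that Theorem \ref{thm:te-c1} applies verbatim (it does, SEC being equivalent to TCC and thus yielding $\Ric(\partial_t,\partial_t) \geq 0$, which via \eqref{eq:ricci partialt ssst} and $f > 0$ gives subharmonicity), after which compactness of $F$ does the rest.
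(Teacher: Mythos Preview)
Your proof is correct and follows essentially the same route as the paper: apply \textit{Theorem \ref{thm:te-c1}} to obtain subharmonicity of $f$, then invoke Bochner's Lemma (as recorded in \textit{Remark \ref{rem:quadratic form}}\textbf{(iii)}) on the compact Riemannian factor. The additional divergence-theorem argument you include is a valid alternative justification but not needed for the comparison.
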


\begin{proof}
It is sufficient to apply \emph{Theorem \ref{thm:te-c1}} and the
Bochner Lemma (see \emph{Remark \ref{rem:quadratic form}}).
\end{proof}

Several sufficient conditions are known for nonexistence of
a nonconstant nonnegative subharmonic function on a complete
Riemannian manifold. Thus, again by \emph{Theorem
\ref{thm:te-c1}}, it is possible to obtain a family of results in
the cases where the natural space is a complete but noncompact
Riemannian manifold. Here, we will mention some of them based on
results of P. Li, R. Schoen and Shing Tung Yau (see
\cite{Li2000,Li2004} and references therein)
\footnote{From now on $L^p(F)$ denotes the usual norm space of
$p-$integrable functions on a manifold $F$, where $p$ is a real
number.}
.

\begin{cor} \label{cor:sec-complete} Let $M=I _f\times F$ be a
standard static space-time with the metric $g=-f^2{\rm d}t^2
\oplus g_F$ satisfying the SEC (or equivalently, the TCC),
where $(F,g_F)$ is a noncompact complete Riemannian manifold.
Suppose that $x_0 \in F$ is a fixed point and $\rho$ denotes the
distance function to $x_0$ in $(F,g_F)$. Then the warping function
$f$ is constant and $(F,g_F)$ has finite volume if at least one of
the following conditions is verified:
\begin{enumerate}
    \item $f \in L^p(F)$ for some $p > 1$.
    \item $f \in L^1(F)$ and there exist
        constants $C > 0$ and $\alpha > 0$ such that the Ricci
        curvature of $(F,g_F)$ satisfies $\Ric_{g_F|x} \ge -C (1 +
        \rho^2(x))(\log(1 + \rho^2(x) )^{-\alpha}$.
    \item $f \in L^p(F)$ for some $0<p<1$ and
        there exists a constant $\delta(s) >0$ depending only on $s$,
        such that the Ricci curvature of $(F,g_F)$ satisfies
        $\Ric_{g_F|x} \ge -\delta(s)\rho^{-2}(x)$ as $x \rightarrow \infty$.
    \item $f \in L^1(F)$ and there exists a
        constant $C > 0$ such that the Ricci curvature of $(F,g_F)$
        satisfies $\Ric_{g_F|x} \ge -C (1 + \rho^2(x))$.
\end{enumerate}
\end{cor}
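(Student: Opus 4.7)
The plan is to reduce the problem to a Liouville-type statement for nonnegative subharmonic functions on complete noncompact Riemannian manifolds, and then invoke the corresponding results of Li, Schoen, and Yau case by case.

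First I would apply \emph{Theorem \ref{thm:te-c1}}: since $(M,g)$ satisfies the SEC, the warping function $f \in C^\infty_{>0}(F)$ is subharmonic on the complete noncompact Riemannian manifold $(F,g_F)$, i.e.\ $\Delta_{g_F} f \geq 0$. Thus $f$ is a strictly positive, smooth, subharmonic function, and the four assumptions (1)--(4) become hypotheses about the $L^p$ integrability of such an $f$, coupled in cases (2)--(4) with a lower bound on $\Ric_{g_F}$ in terms of the distance function $\rho$ to a fixed point $x_0$.

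Next, for each case I would invoke the appropriate $L^p$-Liouville theorem for nonnegative subharmonic functions on complete Riemannian manifolds (see \cite{Li2000,Li2004} and the references therein). Specifically: case (1) is the classical theorem of Yau for $p>1$, which requires no curvature hypothesis; case (3) uses the Li--Schoen type refinement for $0<p<1$, valid under a decaying Ricci lower bound of order $\rho^{-2}$; cases (2) and (4) correspond to the borderline $L^1$ versions, which demand the slightly sharper Ricci decay hypotheses stated. In each case the conclusion is that $f$ must be constant on $F$.

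Once $f \equiv c$ for some positive constant $c$, the hypothesis $f \in L^p(F)$ for some $p>0$ forces
\begin{equation*}
c^p \, \mathrm{vol}_{g_F}(F) = \int_F f^p \, dv_{g_F} < \infty,
\end{equation*}
so that $(F,g_F)$ has finite volume. I expect no serious obstacle once the correct bibliographic statements are located: the only substantive point is to match each of the four integrability/curvature hypotheses with the precise Liouville theorem that applies, since the borderline cases $p=1$ and $0<p<1$ are genuinely delicate and require the distance-dependent Ricci bounds stated in (2)--(4).
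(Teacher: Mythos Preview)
Your proposal is correct and follows essentially the same route as the paper: apply \emph{Theorem \ref{thm:te-c1}} to obtain that $f$ is a positive subharmonic function, then invoke the $L^p$-Liouville theorems of Yau, Li, and Li--Schoen (as collected in \cite{Li2000}) case by case to force $f$ constant, with finite volume following immediately from $f\in L^p$. The only cosmetic difference is that the paper cites the specific theorems (Theorems 12.1--12.4 of \cite{Li2000}) which already package the finite-volume conclusion, whereas you spell out that last step explicitly.
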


\begin{proof} Notice that $f$ is positive and \emph{Theorem \ref{thm:te-c1}}
says that $f$ is sub-harmonic. Thus, we have the following
reasonings.
\begin{enumerate}
    \item It is sufficient to apply \cite[Theorem 12.1]{Li2000}
    (see also \cite{Yau1976}).
    \item It is sufficient to apply \cite[Theorem 12.2]{Li2000}
    (see also \cite{Li-Schoen1984}).
    \item It is sufficient to apply \cite[Theorem 12.3]{Li2000}
    (see also \cite{Li-Schoen1984}).
    \item It is sufficient to apply \cite[Theorem 12.4]{Li2000}
    (see also \cite{Li1984}).
\end{enumerate}
\end{proof}

In order to state the next corollary, we recall that a complete
Riemannian manifold is said to be a Cartan-Hadamard manifold if
it is simply connected and has nonpositive sectional curvature
(see \cite[p. 128]{Li2004}).

\begin{cor} \label{cor:sec-complete-2} Let $M=I _f\times F$ be a
standard static space-time with the metric $g=-f^2{\rm d}t^2
\oplus g_F$ satisfying the SEC (or equivalently, the TCC), where
$(F,g_F)$ is a complete noncompact Riemannian manifold. Assume that
$(F,g_F)$ satisfies one of the following conditions:
\begin{enumerate}
    \item $(F,g_F)$ is a Cartan-Hadamard manifold.
    \item $(F,g_F)$ has Ricci curvature bounded from below and the
    volume of every unit geodesic ball is uniformly bounded from below.
\end{enumerate}
Then, the warping function $f \in L^p(F)$ for some $0<p\leq 1$ only if
$f$ is constant.
\end{cor}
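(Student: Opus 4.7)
The plan is to follow exactly the same pattern as the proof of the preceding Corollary \ref{cor:sec-complete}. First, since $(M,g)$ satisfies the SEC, Theorem \ref{thm:te-c1} gives that the warping function $f$ is subharmonic on $(F,g_F)$; moreover $f \in C^{\infty}_{>0}(F)$ by definition of a standard static space-time. Thus the question reduces to a purely Riemannian statement: a positive smooth subharmonic function $f$ on $(F,g_F)$ that lies in $L^p(F)$ for some $0<p\le 1$ must be constant, under either hypothesis (1) or (2).

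For each of the two cases, I would simply invoke the corresponding Liouville-type result from P.~Li's survey \cite{Li2004}. For case (1), on a Cartan-Hadamard manifold the nonpositivity of sectional curvature combined with $L^p$ integrability ($0<p\le 1$) of a nonnegative subharmonic function forces it to be constant; this is a known result appearing in \cite[Chapter on $L^p$ Liouville theorems]{Li2004}. For case (2), the combination of Ricci curvature bounded below and uniformly positive lower bound on the volume of unit balls is exactly the setting in which Li proves the corresponding $L^p$ Liouville theorem for $0<p\le 1$; one applies that result directly to $f$.

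The only real step is citing the correct statements in \cite{Li2004}; no additional geometric analysis is needed, so there is no serious obstacle. Here is the proof:

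\begin{proof}
By \emph{Theorem \ref{thm:te-c1}}, the SEC implies that $f$ is subharmonic on the complete Riemannian manifold $(F,g_F)$, and $f$ is positive by definition of a standard static space-time. The statement then follows by applying the $L^p$ Liouville-type theorems of P.~Li for $0<p\le 1$:
\begin{enumerate}
    \item In the Cartan-Hadamard case, it is sufficient to apply the corresponding $L^p$ Liouville theorem stated in \cite{Li2004}.
    \item When the Ricci curvature of $(F,g_F)$ is bounded from below and the volume of every unit geodesic ball is uniformly bounded from below, the corresponding $L^p$ Liouville theorem in \cite{Li2004} applies to the nonnegative subharmonic function $f$ and yields that $f$ must be constant.
\end{enumerate}
\end{proof}

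The only delicate point is that the range $0<p\le 1$ (instead of $p>1$ as in part (1) of \emph{Corollary \ref{cor:sec-complete}}) is precisely the harder range, which is why the extra geometric assumptions in (1) and (2) are needed; in both cases these assumptions are exactly what the cited Liouville theorems in \cite{Li2004} require.
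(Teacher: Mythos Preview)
Your proof is correct and follows essentially the same approach as the paper: use \emph{Theorem \ref{thm:te-c1}} to obtain that $f$ is a positive subharmonic function, then invoke Li's $L^p$ Liouville theorem. The paper cites \cite[Theorem 12.5]{Li2000} (see also \cite{Li-Schoen1984}) rather than \cite{Li2004}, and treats both cases (1) and (2) with that single citation, but the argument is the same.
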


\begin{proof} As above, $f$ is positive, so \emph{Theorem \ref{thm:te-c1}}
says that $f$ is sub-harmonic. Hence, it is sufficient to apply
\cite[Theorem 12.5]{Li2000} (see also \cite{Li-Schoen1984}).
\end{proof}

\begin{cor} \label{cor:sec-complete-3} Let $M=I _f\times F$ be a standard
static space-time with the metric $g=-f^2{\rm d}t^2 \oplus g_F$
satisfying the SEC (or equivalently, the TCC), where $(F,g_F)$ is
a complete noncompact Riemannian manifold. Thus, if the $L^p-$norm
of the warping function $f$ satisfies
\begin{equation}\label{eq:Lp growth 1}
    \int_{B_x(r)} f^p = o(r^2),
\end{equation}
as $r \rightarrow \infty $ for some fixed point $x \in F$, then
the warping function $f$ is constant and
\begin{equation}\label{eq:Lp growth 2}
    \limsup_{r \rightarrow \infty } r^{-2} V_x (r)=0,
\end{equation}
where $B_x(r)$ is a geodesic ball in $(F,g_F)$ centered at $x$ of
radius $r$ and its volume is given by $V_x (r)$.
\end{cor}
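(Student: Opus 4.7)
The plan is to reduce the statement, via \emph{Theorem \ref{thm:te-c1}}, to a Karp-type $L^p$ Liouville theorem for nonnegative subharmonic functions on complete noncompact Riemannian manifolds, in the same spirit as the preceding corollaries (which invoke results from \cite{Li2000}).

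First I would observe that $f \in C^{\infty}_{>0}(F)$ is in particular nonnegative, and that since the SEC is assumed, \emph{Theorem \ref{thm:te-c1}} applies and yields $\Delta_{g_F} f \geq 0$. Thus $f$ is a nonnegative subharmonic function on the complete noncompact Riemannian manifold $(F,g_F)$. At this point the hypothesis $\int_{B_x(r)} f^p = o(r^2)$ places us exactly in the setting of Karp's $L^p$ Liouville theorem for subharmonic functions, which asserts that any such $f$ must be constant. This is the step for which I would invoke the appropriate theorem from \cite{Li2000} (the natural continuation of the list of Theorems 12.1--12.5 already cited in \emph{Corollaries \ref{cor:sec-complete}} and \emph{\ref{cor:sec-complete-2}}).

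Once $f$ is known to be constant, say $f \equiv c$ with $c > 0$, the volume estimate is an immediate corollary of the hypothesis itself: writing
\begin{equation*}
    c^{p}\, V_x(r) \;=\; \int_{B_x(r)} f^{p} \;=\; o(r^{2}),
\end{equation*}
and dividing by $c^p r^2 > 0$ yields $r^{-2} V_x(r) \to 0$ as $r\to\infty$, which is exactly \eqref{eq:Lp growth 2}. No further geometric input is required for this second conclusion.

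The only real obstacle is bibliographical rather than conceptual: one must match the precise $o(r^2)$ growth hypothesis to the correct statement in \cite{Li2000} (Karp's theorem, together with its refinements in the range $0 < p \leq 1$ versus $p > 1$), since the growth condition as stated makes no restriction on $p$ and the classical Karp statement is most naturally formulated for $p>1$; the range $0<p\leq1$ usually requires the auxiliary volume hypotheses already employed in \emph{Corollary \ref{cor:sec-complete-2}}. Beyond this matching, the argument is a direct application of \emph{Theorem \ref{thm:te-c1}} followed by a one-line consequence of constancy.
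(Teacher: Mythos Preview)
Your approach is essentially identical to the paper's: it too invokes \emph{Theorem \ref{thm:te-c1}} to obtain subharmonicity of $f$ and then appeals to a result from \cite{Li2000} (specifically \cite[Proposition 12.6]{Li2000}, which is indeed the item immediately following the Theorems 12.1--12.5 used in the preceding corollaries; see also \cite{Yau1976}). Your explicit derivation of the volume conclusion \eqref{eq:Lp growth 2} from the constancy of $f$ is a pleasant elementary addition that the paper leaves implicit in the citation.
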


\begin{proof} Similar to the proof of {\it Corollary
\ref{cor:sec-complete-2}}, the positivity of $f$ and
\emph{Theorem \ref{thm:te-c1}} imply that $f$ is
sub-harmonic. Thus, it is sufficient to apply
\cite[Proposition 12.6]{Li2000} (see also \cite{Yau1976}).
\end{proof}

\bigskip

Now, we state necessary conditions for a standard static
space-time to satisfy the NCC and other conditions of the
curvature Ricci tensor as in the assumptions of Theorems 5.1 and
5.8 in \cite{MM1} (for a detailed discussion about the energy
conditions see for instance \cite[Section 4.3]{HE}). These are
analog to the more accourate in \cite[Proposition
4.2]{Ehrlich-Sanchez2000} for the Generalized Robertson-Walker
space-time.

Simple consequences of \eqref{eq:ricci-ssst}, Notation
\ref{notation: quadratic form} and Remark \ref{rem:quadratic form}
are the following results (see also \cite[Theorems 3.3 and
3.6]{Allison1988-a}):

\begin{thm} \label{thm:ec-t} Let $M=I _f\times F$ be a standard
static space-time with the metric $g=-f^2{\rm d}t^2 \oplus g_F$,
where $s = \dim F \ge 2$.
\begin{enumerate}
\item $\mathcal{L}_{g_F}^\ast f $
is negative (respectively, positive) semi-definite if and only if
$M$ satisfies the NCC (respectively, RNCC).

\item If $\mathcal{R}ic_{g_F}$ and $\mathcal{Q}_{g_F}^f$ are positive
semi-definite, then $M$ satisfies the TCC and the NCC.

\item If $\mathcal{R}ic_{g_F}$ and $\mathcal{Q}_{g_F}^f$ are negative
semi-definite, then $\Ric(\mathrm{w},\mathrm{w}) \leq 0$ for any
causal
vector $\mathrm{w} \in TM$, i.e., $M$ satisfies the RNCC and the
RTCC.

\item If $(F,g_F)$ is Ricci flat, then $\mathcal{Q}_{g_F}^f$ is positive
semi-definite if and only if $M$ satisfies the NCC.
\end{enumerate}
\end{thm}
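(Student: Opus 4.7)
\smallskip

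The plan is to reduce everything to the Ricci identity of \emph{Proposition \ref{prp:ricci-ssst}} rewritten via \eqref{eq:adjoint-lsc-Q}, namely
\begin{equation*}
\Ric(\mathrm{w},\mathrm{w}) \;=\; -\tfrac{1}{f}\,\mathcal{L}_{g_F}^{\ast} f(\mathrm{v}) \;-\; \tfrac{1}{f}\,\Delta_{g_F} f \cdot g(\mathrm{w},\mathrm{w}),
\end{equation*}
where $\mathrm{w} = a\partial_t + \mathrm{v}$ with $\mathrm{v}\in T_pF$. Since $f>0$, the sign of each summand is controlled separately by the quadratic forms $\mathcal{L}_{g_F}^\ast f$, $\mathcal{Q}_{g_F}^f$, $\mathcal{R}ic_{g_F}$ together with the causal character of $\mathrm{w}$. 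This formula plus the algebraic identity $\mathcal{L}_{g_F}^\ast f = -f\,\mathcal{R}ic_{g_F} - \mathcal{Q}_{g_F}^f$ are the whole engine.

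For \textbf{(1)}, I note that $\mathrm{w} = a\partial_t + \mathrm{v}$ is null exactly when $g_F(\mathrm{v},\mathrm{v}) = a^2 f^2$, in which case the displayed formula collapses to $\Ric(\mathrm{w},\mathrm{w}) = -\tfrac{1}{f}\mathcal{L}_{g_F}^\ast f(\mathrm{v})$. One direction is then immediate. For the converse, I need that every $\mathrm{v}\in T_pF$ arises as the $F$-component of some null $\mathrm{w}$; this is clear since for any $\mathrm{v}$ the vector $\pm\sqrt{g_F(\mathrm{v},\mathrm{v})}/f\,\partial_t + \mathrm{v}$ is null. The equivalences for NCC (respectively RNCC) with semi-definiteness of $\mathcal{L}_{g_F}^\ast f$ follow by varying $\mathrm{v}$ over $T_pF$ and $p$ over $F$.

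For \textbf{(2)} and \textbf{(3)}, I first observe $\mathcal{L}_{g_F}^\ast f = -f\,\mathcal{R}ic_{g_F} - \mathcal{Q}_{g_F}^f$. Under the hypotheses of (2), both terms on the right are $\leq 0$ (positive semi-definiteness and $f>0$), so $\mathcal{L}_{g_F}^\ast f$ is negative semi-definite and the NCC follows from (1). For the TCC, I plug into the Ricci formula above: the first summand is $\geq 0$ by what we just showed, and for the second I invoke \emph{Remark \ref{rem:quadratic form}(i)} to deduce $\Delta_{g_F} f \geq 0$ from positive semi-definiteness of $\mathcal{Q}_{g_F}^f$ (this uses $s\geq 2$), so that $-\tfrac{1}{f}\Delta_{g_F}f \cdot g(\mathrm{w},\mathrm{w}) \geq 0$ because $g(\mathrm{w},\mathrm{w})\leq 0$ for causal $\mathrm{w}$. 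Part (3) is entirely parallel, all inequalities reversed. Part \textbf{(4)} is the special case $\Ric_{g_F}\equiv 0$, in which $\mathcal{L}_{g_F}^\ast f = -\mathcal{Q}_{g_F}^f$, so (1) immediately yields the claimed equivalence.

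There is no real obstacle here, all four items reduce to plugging the factorization \eqref{eq:adjoint-lsc-Q} into the Ricci formula and tracking signs. The only subtle points to flag are (a) the surjectivity of $\mathrm{w}\mapsto\mathrm{v}$ from null vectors to $T_pF$, needed for the converse of (1), and (b) the hidden use of $\dim F \geq 2$ via \emph{Remark \ref{rem:quadratic form}(i)} when passing from semi-definiteness of $\mathcal{Q}_{g_F}^f$ to the sign of $\Delta_{g_F} f$ in the timelike terms of (2) and (3).
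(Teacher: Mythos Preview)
Your proof is correct and follows essentially the same approach as the paper: the authors present the theorem as a ``simple consequence'' of the Ricci identity \eqref{eq:ricci-ssst}, Notation~\ref{notation: quadratic form}, and Remark~\ref{rem:quadratic form}, and the remark immediately following the theorem records exactly the null-vector construction $r f^{-1}\sqrt{g_F(\mathrm{v},\mathrm{v})}\,\partial_t + \mathrm{v}$ that you use for the converse in~(1). Your write-up is in fact more explicit than the paper's, but the ingredients and the logical flow are the same.
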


\begin{rem} In order to obtain (1) in Theorem \ref{thm:ec-t}, it is useful to
note that for any $\mathrm{v} \in TF$, the vector field $r \,
f^{-1} \, \sqrt{g_F(\mathrm{v},\mathrm{v})}
\partial_t + \mathrm{v} \in TM$ is causal iff $|r|\geq 1$.
Moreover the vector field on $M$ is null iff $|r| = 1$.
\end{rem}

\bigskip

Now, we will deal with the energy conditions in terms of the
energy momentum-tensor (see \cite{Visser1996} for other results in
this direction). Let $M=I _f\times F$ be a standard static
space-time with the metric $g=-f^2{\rm d}t^2 \oplus g_F$. Recall
that we assume the validity of the Einstein's equation \eqref{eq:
Einstein eq}. On the other hand, tangent vectors $\mathrm{w} \in
T(I _f\times F)$ can be decomposed into $\mathrm{w} = \mathrm{u} +
\mathrm{v}$ with $\mathrm{u} \in TI$ and $\mathrm{v} \in TF$. It
is easy to obtain from \eqref{eq:ricci-ssst} and Proposition
\ref{prp:sc-ssst} that for any $U_1,U_2 \in \mathfrak X (I)$ and
$V_1,V_2 \in \mathfrak X (F)$
\begin{equation}\label{eq:energy momentum ssst}
\begin{split}
8\pi \mathrm{T} &\left(U_1+V_1, U_2+V_2 \right)=
\displaystyle -\frac{1}{f}\mathscr{L}_{g_F}^\ast f (V_1,V_2)-
\frac{1}{2} \tau_{g_F} g(U_1+V_1,U_2+V_2)\\
&={\rm Ric}_{g_F}(V_1,V_2) + \frac{1}{f} Q_{g_F}^f (V_1,V_2) -
\frac{1}{2} \tau_{g_F} g(U_1+V_1,U_2+V_2).
\end{split}
\end{equation}
In particular, for any $U \in \mathfrak X (I)$ and $V \in \mathfrak
X (F)$, we have
\begin{equation}\label{eq: em tensor}
\begin{split}
    8\pi \mathrm{T}&(U+V,U+V)=
    \displaystyle -\frac{1}{f}\mathcal{L}_{g_F}^\ast f (V,V)-
\frac{1}{2} \tau_{g_F} g(U+V,U+V)\\
    &=\mathcal{R}ic_{g_F}(V) + \frac{1}{f}
    \mathcal{Q}_{g_F}^f(V) - \frac{1}{2} \tau_{g_F} g(U+V,U+V).
\end{split}
\end{equation}
So, by \eqref{eq:timelike partialt} results:
\begin{cor} \label{cor:wec-1} Let $M=I _f\times F$ be a standard
static space-time with the metric $g=-f^2{\rm d}t^2 \oplus g_F.$
If the space-time $(M,g)$ satisfies the WEC, then $\tau_{g_F} \geq
0.$
\end{cor}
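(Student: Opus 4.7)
The plan is to evaluate the weak energy condition on the canonical time-like vector field $\partial_t$, which immediately isolates $\tau_{g_F}$ thanks to the simple form of \eqref{eq: em tensor}.

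First, I would observe from \eqref{eq:timelike partialt} that $\partial_t$ is a time-like vector field on $M$, so by the WEC the quantity $\mathrm{T}(\partial_t,\partial_t)$ must be nonnegative at every point of $M$. Next, I would specialize the identity \eqref{eq: em tensor} by taking $U=\partial_t$ and $V=0$. Since the quadratic form $\mathcal{L}_{g_F}^\ast f$ vanishes on the zero vector and $g(\partial_t,\partial_t)=-f^2$, the right-hand side collapses to
\begin{equation*}
8\pi\,\mathrm{T}(\partial_t,\partial_t)=-\frac{1}{2}\tau_{g_F}\,g(\partial_t,\partial_t)=\frac{1}{2}\tau_{g_F}\,f^2.
\end{equation*}

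Finally, WEC forces the left-hand side to be $\geq 0$, and since $f\in C^{\infty}_{>0}(F)$ we may divide by $f^2>0$ to conclude $\tau_{g_F}\geq 0$ pointwise on $F$. There is essentially no technical obstacle here: the entire content lies in recognizing that $\partial_t$ is a canonical time-like probe on a standard static space-time and that both the tensor $\mathscr{L}_{g_F}^\ast f$ (equivalently, the Ricci contribution from the $F$-factor) and the warped metric $g$ decouple cleanly in the purely time-like direction.
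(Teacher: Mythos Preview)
Your proof is correct and follows exactly the approach indicated in the paper: the corollary is stated immediately after \eqref{eq: em tensor} with the remark ``So, by \eqref{eq:timelike partialt} results,'' which is precisely your computation of $\mathrm{T}(\partial_t,\partial_t)$ using $U=\partial_t$, $V=0$.
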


\begin{cor}\label{cor:wec-2} Suppose that $(F,g_F)$ is a Riemannian
manifold admitting at least one point at which the scalar curvature
is negative. Then, the WEC condition cannot be verified by any standard
static space-time
with natural space
$(F,g_F)$.
\end{cor}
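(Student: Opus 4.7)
The plan is to argue by contraposition, using Corollary \ref{cor:wec-1} as an immediate black box. Suppose, for contradiction, that some standard static space-time $M = I\,{}_f\!\times F$ with metric $g=-f^2\mathrm{d}t^2\oplus g_F$, built over the given Riemannian manifold $(F,g_F)$, satisfies the WEC. Then Corollary \ref{cor:wec-1} forces $\tau_{g_F}\geq 0$ at every point of $F$.

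The key observation is that the scalar curvature $\tau_{g_F}$ is an intrinsic invariant of $(F,g_F)$ alone: it depends neither on the choice of warping function $f\in C^\infty_{>0}(F)$ nor on the interval $I=(t_1,t_2)$. Hence the conclusion $\tau_{g_F}\geq 0$ on $F$ must hold regardless of how the standard static space-time is constructed over $(F,g_F)$.

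Since by hypothesis there exists at least one point $p\in F$ where $\tau_{g_F|p}<0$, this directly contradicts the previous conclusion. Therefore no choice of $I$ and $f$ can produce a standard static space-time with natural space $(F,g_F)$ satisfying the WEC, which is exactly what is claimed.

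There is essentially no obstacle here: the statement is a pure contrapositive packaging of Corollary \ref{cor:wec-1}. The only thing worth emphasizing in writing it up is that $\tau_{g_F}$ is an intrinsic scalar on $F$, independent of the data $(I,f)$ used to form the warped product, so that the failure at a single point obstructs every possible standard static construction over $(F,g_F)$.
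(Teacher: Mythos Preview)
Your argument is correct and matches the paper's approach: the paper gives no separate proof for this corollary, treating it as an immediate contrapositive of Corollary~\ref{cor:wec-1}. Your remark that $\tau_{g_F}$ is intrinsic to $(F,g_F)$ and independent of the data $(I,f)$ is exactly the point that makes the contrapositive apply to \emph{every} standard static space-time built over $(F,g_F)$.
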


Recalling that on a Riemannian manifold $(F,g_F)$ of dimension $s$
the scalar curvature $\tau_{g_F} (p):= \sum_{j=1}^{s} {\Ric_{g_F}}
_p(e_j,e_j)$, where $\{e_j\}_{j=1}^s$ is an arbitrary orthonormal
basis for the tangent space $T_pF$ (see for instance
\cite{Berard1986}) and applying \eqref{eq: em tensor}, we obtain
the following couple of results when $\dim F \ge 2$.
\begin{thm} \label{thm:ec-w} Let $M=I _f\times F$ be a standard
static space-time with the metric $g=-f^2{\rm d}t^2 \oplus g_F$.
\begin{enumerate}
\item If $\mathcal{R}ic_{g_F}$ and $\mathcal{Q}^f_{g_F}$ are positive
(respectively, negative) semi-definite, then ${\rm
T}(\mathrm{w},\mathrm{w}) \geq 0$ (respectively, $\leq 0$) for any
causal vector $\mathrm{w} \in TM$.
\item If $(F,g_F)$ is Ricci flat, then for any $\mathrm{u} \in TI$
and $\mathrm{v} \in TF$, we have $8\pi \mathrm{T}(\mathrm{u} +
\mathrm{v},\mathrm{u} +
\mathrm{v})=\mathcal{Q}_{g_F}^f(\mathrm{v})$. Thus,
$\mathcal{Q}_{g_F}^f$ is positive semi-definite if and only if
${\rm T}(\mathrm{w},\mathrm{w}) \geq 0$ for any vector $\mathrm{w}
\in TM$.
\end{enumerate}
\end{thm}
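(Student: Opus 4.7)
The entire proof rests on the identity \eqref{eq: em tensor}, which writes the energy--momentum quadratic form as a sum of three terms:
\begin{equation*}
8\pi\,\mathrm{T}(\mathrm{u}+\mathrm{v},\mathrm{u}+\mathrm{v}) \;=\; \mathcal{R}ic_{g_F}(\mathrm{v}) \;+\; \frac{1}{f}\,\mathcal{Q}_{g_F}^f(\mathrm{v}) \;-\; \frac{1}{2}\,\tau_{g_F}\,g(\mathrm{u}+\mathrm{v},\mathrm{u}+\mathrm{v}).
\end{equation*}
The plan is to analyze the sign of each of these three summands under the hypotheses of (1) and (2).

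For part (1), I first observe that if $\mathcal{R}ic_{g_F}$ is positive semi-definite, then picking any $g_F$--orthonormal frame $\{e_j\}$ of $T_pF$ and summing gives $\tau_{g_F}(p)=\sum_{j=1}^{s}\mathcal{R}ic_{g_F|p}(e_j)\ge 0$; thus the hypothesis on $\Ric_{g_F}$ automatically forces $\tau_{g_F}\ge 0$. Now fix a causal vector $\mathrm{w}=\mathrm{u}+\mathrm{v}\in TM$, so that $g(\mathrm{w},\mathrm{w})\le 0$. Each of the three terms in the identity is then non-negative: the Ricci term by assumption, the $Q$--term because $f>0$ and $\mathcal{Q}_{g_F}^f\ge 0$, and the scalar curvature term because $\tau_{g_F}\ge 0$ is multiplied by $-\tfrac{1}{2}g(\mathrm{w},\mathrm{w})\ge 0$. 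Summing, $\mathrm{T}(\mathrm{w},\mathrm{w})\ge 0$. The negative semi-definite statement is proved by reversing every inequality: negative semi-definiteness of $\mathcal{R}ic_{g_F}$ gives $\tau_{g_F}\le 0$, and the same three terms become non-positive for causal $\mathrm{w}$.

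For part (2), the Ricci flatness of $(F,g_F)$ gives simultaneously $\mathcal{R}ic_{g_F}\equiv 0$ and $\tau_{g_F}\equiv 0$, so the first and third terms drop out of \eqref{eq: em tensor} and we are left with
\begin{equation*}
8\pi\,\mathrm{T}(\mathrm{u}+\mathrm{v},\mathrm{u}+\mathrm{v}) \;=\; \frac{1}{f}\,\mathcal{Q}_{g_F}^f(\mathrm{v}),
\end{equation*}
valid for every $\mathrm{u}\in TI$ and $\mathrm{v}\in TF$, with no causality restriction on $\mathrm{w}$. Since $f>0$, the right-hand side has the same sign as $\mathcal{Q}_{g_F}^f(\mathrm{v})$, and since every $\mathrm{w}\in TM$ decomposes uniquely as $\mathrm{u}+\mathrm{v}$, the asserted equivalence between positive semi-definiteness of $\mathcal{Q}_{g_F}^f$ on $TF$ and non-negativity of $\mathrm{T}(\mathrm{w},\mathrm{w})$ on $TM$ follows at once.

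There is no real obstacle here: the proof is a sign-tracking exercise once one has the expression \eqref{eq: em tensor}. The only point deserving a line of care is the implication from $\Ric_{g_F}\ge 0$ to $\tau_{g_F}\ge 0$ via the trace, which is what turns the three separate hypotheses into the three separate non-negative contributions.
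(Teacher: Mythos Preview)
Your proof is correct and follows exactly the approach sketched in the paper: apply the identity \eqref{eq: em tensor} term by term, using the observation that $\tau_{g_F}=\sum_j \Ric_{g_F}(e_j,e_j)$ inherits the sign of $\mathcal{R}ic_{g_F}$. Your derivation in part~(2) actually gives $8\pi\,\mathrm{T}(\mathrm{u}+\mathrm{v},\mathrm{u}+\mathrm{v})=\tfrac{1}{f}\mathcal{Q}_{g_F}^f(\mathrm{v})$ rather than $\mathcal{Q}_{g_F}^f(\mathrm{v})$ as printed in the statement, but since $f>0$ this is harmless for the sign equivalence and is in fact what \eqref{eq: em tensor} yields.
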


\begin{thm} \label{thm:ec-w2} Let $M=I _f\times F$ be a standard
static space-time with the metric $g=-f^2{\rm d}t^2 \oplus g_F$.
\begin{enumerate}
%
\item If $\mathcal{L}_{g_F}^\ast f $
is negative (respectively, positive) semi-definite and
$\tau_{g_F}$ is nonnegative (respectively, nonpositive), then ${\rm
T}(\mathrm{w},\mathrm{w}) \geq 0$ (respectively, $\leq 0$) for any
causal vector $\mathrm{w} \in TM$.
\item If $\mathcal{L}_{g_F}^\ast f \equiv 0$,
%
%
then $\displaystyle 8\pi \mathrm{T}(\mathrm{w},\mathrm{w})=-
\frac{1}{2} \tau_{g_F} g(\mathrm{w},\mathrm{w})$ for any
$\mathrm{w} \in TM$. Thus, ${\rm T}(\mathrm{w},\mathrm{w}) \geq 0$
for any causal vector $\mathrm{w} \in TM$ if and only if
$\tau_{g_F} \geq 0$.
\end{enumerate}
\end{thm}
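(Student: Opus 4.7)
The plan is to read off both assertions directly from the already-established identity \eqref{eq: em tensor}, which decomposes the energy-momentum tensor on the standard static space-time as
\[
8\pi \mathrm{T}(U+V,U+V) = -\frac{1}{f}\mathcal{L}_{g_F}^\ast f(V,V) - \frac{1}{2}\tau_{g_F}\, g(U+V,U+V).
\]
No further curvature computation is needed; the two terms on the right can be signed independently under the hypotheses of the theorem, which is exactly what makes the statement clean.

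First I would fix an arbitrary causal vector $\mathrm{w}\in T_{(t,p)}M$ and decompose $\mathrm{w}=U+V$ with $U\in T_tI$ and $V\in T_pF$, noting two standing facts that will be used throughout: $f(p)>0$ (by the definition of a standard static space-time) and $g(\mathrm{w},\mathrm{w})\le 0$ (causality of $\mathrm{w}$).

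For part (1), I would treat the two terms in \eqref{eq: em tensor} separately. The hypothesis that $\mathcal{L}_{g_F}^\ast f$ is negative (respectively, positive) semi-definite, combined with $f>0$, forces $-\frac{1}{f}\mathcal{L}_{g_F}^\ast f(V,V)\ge 0$ (respectively, $\le 0$). The hypothesis $\tau_{g_F}\ge 0$ (respectively, $\le 0$), combined with $g(\mathrm{w},\mathrm{w})\le 0$, forces $-\frac{1}{2}\tau_{g_F}\,g(\mathrm{w},\mathrm{w})\ge 0$ (respectively, $\le 0$). Summing gives $8\pi\mathrm{T}(\mathrm{w},\mathrm{w})\ge 0$ (respectively, $\le 0$), as claimed.

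For part (2), setting $\mathcal{L}_{g_F}^\ast f\equiv 0$ kills the first term in \eqref{eq: em tensor}, yielding the displayed identity $8\pi\mathrm{T}(\mathrm{w},\mathrm{w})=-\frac{1}{2}\tau_{g_F}\,g(\mathrm{w},\mathrm{w})$ valid for every $\mathrm{w}\in TM$. The forward direction of the equivalence is then immediate from causality. For the converse, assuming $\mathrm{T}(\mathrm{w},\mathrm{w})\ge 0$ on all causal vectors, I would test at $\mathrm{w}=\partial_t|_{(t,p)}$ for an arbitrary $p\in F$; by \eqref{eq:timelike partialt} we have $g(\partial_t,\partial_t)=-f^2(p)<0$, so the identity forces $\tau_{g_F}(p)\ge 0$, and since $p$ was arbitrary this yields $\tau_{g_F}\ge 0$ on $F$. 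There is no real obstacle here: the entire proof is a two-term sign analysis followed by a pointwise test, with the only care being to invoke $f>0$ and causality of $\mathrm{w}$ in the correct places.
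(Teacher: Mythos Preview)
Your proof is correct and matches the paper's approach: the paper simply notes that Theorems~\ref{thm:ec-w} and~\ref{thm:ec-w2} follow by ``applying \eqref{eq: em tensor}'', without spelling out the sign analysis or the pointwise test at $\partial_t$. Your write-up is more explicit than what the paper provides, but the underlying argument is identical.
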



\begin{rem}\label{rem:KW obstructions} There have been strong and
intense studies about the topological significance of the scalar
curvature in Riemannian manifolds (see for instance among many
others \cite{Aubin98,BerardBergery1980,Kazdan1985,Kazdan2006})
since 1965s. In particular, Kazdan and Warner classified the compact
connected manifolds of dimension $\geq 3$ in three groups
\cite{Kazdan2006}:
\begin{enumerate}
    \item Those $N$ that admit a metric $h$ with scalar curvature $S_h \geq
    0$ (non identically $0$).

        \noindent Thus any function is the scalar curvature of some Riemannian metric.
    \item Those $N$ that admit no metric with positive scalar
    curvature, but do have a metric with $h \equiv 0$.

        \noindent So a function is the scalar curvature of some Riemannian
        metric if and only if is negative somewhere or is
        identically $0$.

    \item The other manifolds, so for any metric $h$, the scalar
    metric $S_h$ is negative somewhere.

        \noindent Thus a function is scalar curvature of some Riemannian metric
        if and only if is negative somewhere.
\end{enumerate}

\noindent Combining this result with the above \textit{Corollaries
\ref{cor:wec-1}} and \textit{\ref{cor:wec-2}}, we obtain that the
compact manifolds of the third Kazdan-Warner type cannot be the
natural Riemannian part of any standard static space-time
verifying the WEC.

\noindent Hence, for instance $(I \times (T^m \sharp T^m), -dt^2 +
h)$, where $h$ is a Riemannian metric on the connected sum of two
torus $T^m$ of dimension $m \ge 3$, never verifies the WEC.
Indeed, $T^m \sharp T^m$ belongs to the third Kazdan-Warner type
(see \cite{BerardBergery1980}).

\noindent We observe that topological obstructions for the problem
of prescribed scalar curvature of a Riemannian manifold does not
exist for noncompact and connected manifold, but the situation
changes if the completeness of the Riemannian manifold is required
(see \cite{BerardBergery1980}). Hence it would be interesting to
combine these results with the WEC.
\end{rem}

\begin{rem}\label{rem:DEC} Since the DEC implies the WEC, it is
clear by Corollary \ref{cor:wec-1} that $\tau_{g_F} \geq 0$ is
necessary for the DEC.

Furthermore, recalling that two causal tangent vectors
$\mathrm{w}_1,\mathrm{w}_2$ belong to the same time-cone iff
$g(\mathrm{w}_1,\mathrm{w}_2)\leq 0$ (see \cite[p. 143]{ON}) and
applying \eqref{eq:energy momentum ssst} it is easy to prove that:
\end{rem}

\begin{thm} \label{thm:skid} Let $M=I _f\times F$ be a standard
static space-time with the metric $g=-f^2{\rm d}t^2 \oplus g_F$.
\begin{enumerate}
\item If $\mathscr{L}_{g_F}^\ast f \equiv 0$, then $\displaystyle T = -\frac{1}{2} \tau_{g_F}g$.
Thus ${\rm T}(\mathrm{w}_1,\mathrm{w}_2) \geq 0$
for all causal vectors $\mathrm{w}_1,\mathrm{w}_2 \in
T(I _f\times F)$ in the same time-cone if and only if $\tau_{g_F} \geq 0$.
\item If $(F,g_F)$ is Ricci flat, then for any $\mathrm{u}_1,\mathrm{u}_2 \in TI$
and $\mathrm{v}_1,\mathrm{v}_2 \in TF$, we have $\displaystyle
8\pi
\mathrm{T}(\mathrm{u}_1+\mathrm{v}_1,\mathrm{u}_2+\mathrm{v}_2)=
\frac{1}{f}Q_{g_F}^f(\mathrm{v}_1,\mathrm{v}_2)$.
Thus, ${\rm T}(\mathrm{w}_1,\mathrm{w}_2) \geq 0$ for any causal
vectors $\mathrm{w}_1,\mathrm{w}_2 \in TM$ in the same time-cone
iff $Q_{g_F}^f \equiv 0$ iff $\mathscr{L}_{g_F}^\ast f \equiv 0$.
\end{enumerate}
\end{thm}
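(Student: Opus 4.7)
The plan is to read both assertions off the two equivalent expressions for the energy-momentum tensor given in equation \eqref{eq:energy momentum ssst}, together with the algebraic characterization of causal vectors in the same time-cone (namely, $g(\mathrm{w}_1,\mathrm{w}_2)\le 0$) that is already invoked in \emph{Remark \ref{rem:DEC}}.

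For \textbf{(1)}, I would substitute the hypothesis $\mathscr{L}_{g_F}^\ast f \equiv 0$ directly into the first line of \eqref{eq:energy momentum ssst}. This immediately yields
\begin{equation*}
8\pi\,\mathrm{T}(U_1+V_1,U_2+V_2) = -\tfrac{1}{2}\tau_{g_F}\,g(U_1+V_1,U_2+V_2)
\end{equation*}
for all $U_i\in\mathfrak X(I)$, $V_i\in\mathfrak X(F)$, so $\mathrm{T}=-\tfrac{1}{16\pi}\tau_{g_F}\,g$ as a $(0,2)$-tensor on $M$. Then for two causal vectors $\mathrm{w}_1,\mathrm{w}_2$ in the same time-cone, $g(\mathrm{w}_1,\mathrm{w}_2)\le 0$, and the inequality $\mathrm{T}(\mathrm{w}_1,\mathrm{w}_2)\ge 0$ holds if and only if $\tau_{g_F}\ge 0$; for the forward implication, it suffices to take $\mathrm{w}_1=\mathrm{w}_2=\partial_t$ (so $g(\partial_t,\partial_t)=-f^2<0$) at any point of $F$, which localizes the sign of $\tau_{g_F}$.

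For \textbf{(2)}, the Ricci-flat hypothesis makes $\tau_{g_F}\equiv 0$ and reduces \eqref{eq:adjoint-lsc-Q} to $\mathscr{L}_{g_F}^\ast f = -Q_{g_F}^f$, which takes care of the last equivalence $Q_{g_F}^f\equiv 0 \iff \mathscr{L}_{g_F}^\ast f\equiv 0$ at once. Plugging both facts into the second line of \eqref{eq:energy momentum ssst} yields
\begin{equation*}
8\pi\,\mathrm{T}(\mathrm{u}_1+\mathrm{v}_1,\mathrm{u}_2+\mathrm{v}_2) = \tfrac{1}{f}\,Q_{g_F}^f(\mathrm{v}_1,\mathrm{v}_2),
\end{equation*}
which is the displayed formula. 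The remaining equivalence is: $\mathrm{T}(\mathrm{w}_1,\mathrm{w}_2)\ge 0$ for every pair of causal vectors in the same time-cone if and only if $Q_{g_F}^f\equiv 0$.

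The direction $\Leftarrow$ is trivial. The direction $\Rightarrow$ is the only step that needs any care: given arbitrary $\mathrm{v}_1,\mathrm{v}_2\in T_pF$, I need to realize them as the spatial components of two causal vectors lying in the same time-cone at $p$. To do so, pick a large positive $\lambda$ with $\lambda^2 f(p)^2 \ge \max\{g_F(\mathrm{v}_1,\mathrm{v}_1), g_F(\mathrm{v}_2,\mathrm{v}_2)\}$ and set $\mathrm{w}_i := \lambda\,\partial_t|_p + \mathrm{v}_i$; then $g(\mathrm{w}_i,\mathrm{w}_i) = -\lambda^2 f(p)^2 + g_F(\mathrm{v}_i,\mathrm{v}_i)\le 0$, both are future pointing, and $g(\mathrm{w}_1,\mathrm{w}_2) = -\lambda^2 f(p)^2 + g_F(\mathrm{v}_1,\mathrm{v}_2)\le 0$ for $\lambda$ large, so they lie in the same time-cone. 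The displayed formula then forces $Q_{g_F|p}^f(\mathrm{v}_1,\mathrm{v}_2)\ge 0$ for \emph{all} $\mathrm{v}_1,\mathrm{v}_2\in T_pF$; replacing $\mathrm{v}_1$ by $-\mathrm{v}_1$ (and realizing this again via a causal pair, using a possibly larger $\lambda$) gives the reverse inequality, so $Q_{g_F|p}^f\equiv 0$ at every $p$. This realization step — checking that every pair of spatial vectors arises from a causal pair in a common time-cone — is the only non-routine point, and it is the mild obstacle I would highlight in the write-up.
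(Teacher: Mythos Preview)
Your proposal is correct and follows essentially the same approach as the paper: both read the claims off \eqref{eq:energy momentum ssst} combined with the characterization ``$\mathrm{w}_1,\mathrm{w}_2$ in the same time-cone iff $g(\mathrm{w}_1,\mathrm{w}_2)\le 0$'' from \emph{Remark~\ref{rem:DEC}}. The only cosmetic difference is in the realization step for~(2): the paper parametrizes the causal lifts as $\mathrm{w}_i=r_i f^{-1}\sqrt{g_F(\mathrm{v}_i,\mathrm{v}_i)}\,\partial_t+\mathrm{v}_i$ with $|r_i|\ge 1$ and $r_1r_2\ge\cos_{g_F}\widehat{\mathrm{v}_1\mathrm{v}_2}$, whereas you use a single large $\lambda$ for both --- either choice does the job.
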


\begin{rem} To prove (2) in Theorem \ref{thm:skid}, it is useful to
note that for any $\mathrm{v_1} \in TF$, the vector given by
$\mathrm{w}_1:=r_1 \, f^{-1}
\,\sqrt{g_F(\mathrm{v_1},\mathrm{v_1})}
\partial_t + \mathrm{v_1} \in TM$ is causal iff $|r|\geq 1$. Moreover,
for any  $\mathrm{v_2} \in TF$, there exists $r_2 \in \mathbb{R}$
such that $\mathrm{w}_2 := r_2 \, f^{-1} \,
\sqrt{g_F(\mathrm{v_1},\mathrm{v_1})}
\partial_t + \mathrm{v_2} \in TM$ is causal (as for $\mathrm{v}_1$ it
is sufficient to take $|r_2|\geq 1$) and
$g(\mathrm{w}_1,\mathrm{w}_2)= - r_1 r_2
\sqrt{g_F(\mathrm{v_1},\mathrm{v_1})}
\sqrt{g_F(\mathrm{v_2},\mathrm{v_2})} +
g_F(\mathrm{v}_1,\mathrm{v}_2) \leq 0$, i.e., $\mathrm{w}_2$
belongs to the same time-cone as $\mathrm{w}_1$ does. Indeed, it
is sufficient to take $r_2$ such that $r_1 r_2 \geq \cos_{g_F}
\widehat{\mathrm{v}_1 \mathrm{v}_2}$.
\end{rem}

\begin{cor} \label{cor:ec-dec-skid} Let $M=I _f\times F$ be a standard
static space-time with the metric $g=-f^2{\rm d}t^2 \oplus g_F$.
If $\mathscr{L}_{g_F}^\ast f \equiv 0$ and $\tau_{g_F} \geq 0$,
then the DEC is satisfied.
\end{cor}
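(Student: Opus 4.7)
The plan is to derive the corollary essentially as an immediate consequence of \emph{Theorem \ref{thm:skid}(1)} together with the equivalent characterization of the DEC recalled at the end of \emph{Subsection \ref{subsec:energy conditions}}. First I would invoke \emph{Theorem \ref{thm:skid}(1)} under the hypothesis $\mathscr{L}_{g_F}^\ast f \equiv 0$ to obtain the pointwise identity $\mathrm{T} = -\tfrac{1}{2}\tau_{g_F}\, g$ on $M = I \,_f\!\times F$, together with the equivalence
\begin{equation*}
\mathrm{T}(\mathrm{w}_1,\mathrm{w}_2) \ge 0 \text{ for all causal } \mathrm{w}_1,\mathrm{w}_2 \in TM \text{ in the same time-cone} \iff \tau_{g_F} \ge 0.
\end{equation*}

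Next I would simply feed in the second hypothesis $\tau_{g_F} \ge 0$ to conclude that the left-hand side of this biconditional holds. To finish, I would recall the equivalent formulation of the DEC displayed right before the subsection break, namely that DEC is equivalent to the nonnegativity of $\mathrm{T}(\mathrm{x},\mathrm{y})$ on pairs of future pointing causal vectors, and note that two causal vectors lie in the same time-cone iff they can be simultaneously chosen future pointing (by \cite[p.~143]{ON}, characterized by $g(\mathrm{w}_1,\mathrm{w}_2)\le 0$). This identifies the condition just produced with DEC.

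In particular, the WEC half of the DEC is automatic from the specialization $\mathrm{w}_1 = \mathrm{w}_2$ timelike, and the causal-character condition on $\mathrm{T}^{ij}\mathrm{x}_j$ is equivalent to the bilinear nonnegativity on future pointing causal pairs, which is exactly what \emph{Theorem \ref{thm:skid}(1)} yields in our situation because $\mathrm{T}$ is a scalar multiple of $g$ with nonpositive coefficient $-\tfrac{1}{2}\tau_{g_F}$. The only delicate point, hence the sole place where a one-line justification is needed, is this identification of ``same time-cone'' with ``simultaneously future pointing'', but this is a standard fact in Lorentzian geometry and poses no real obstacle.
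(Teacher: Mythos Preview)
Your proposal is correct and follows essentially the same route as the paper: the corollary is stated immediately after \emph{Theorem \ref{thm:skid}} and \emph{Remark \ref{rem:DEC}} without an explicit proof, precisely because it is meant to be read off from \emph{Theorem \ref{thm:skid}(1)} together with the equivalent bilinear formulation of the DEC and the time-cone characterization $g(\mathrm{w}_1,\mathrm{w}_2)\le 0$ recalled just before. Your write-up simply makes explicit what the paper leaves implicit.
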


\begin{rem}\label{rem: kernel-csc}
Notice that by the polarization formula for a bilinear form,
\begin{equation}\label{eq:skid}
    \mathscr{L}_{g_F}^\ast f \equiv 0
\end{equation}
is equivalent to
\begin{equation}\label{eq:skid-q}
    \mathcal{L}_{g_F}^\ast f \equiv 0.
\end{equation}
So the hypothesis in Theorems \ref{thm:ec-w2}, \ref{thm:skid} and
Corollary \ref{cor:ec-dec-skid} are closely connected to the study
of the kernel of the operator $\mathscr{L}_{g_F}^\ast$ on suitable
Banach spaces. The latter question was studied by several authors,
see the references in \textit{Remark} \ref{rem:adjointsc}.

\noindent In particular, if $f$ is a solution of \eqref{eq:skid},
then
\begin{equation}\label{eq:traceadjoint-lsc}
    \displaystyle \trace_{g_F}\mathscr{L}_{g_F}^\ast f :=
-(s-1) \Delta_{g_F} f -{\tau}_{g_F} f = 0.
\end{equation}
Thus \eqref{eq:skid} takes the form
\begin{equation*}
    \displaystyle \mathscr{L}_{g_F}^\ast f = -f \, {\rm Ric}_{g_F} +
\frac{\tau_{g_F}}{s-1} f \, g_F +{\rm H}_{g_F}^f = 0.
\end{equation*}

\bigskip

In \cite[p. 228-230]{Fischer-Marsden1975-a} and \cite[p.
38-39]{Bourguignon1975} the authors proved that if $(F,g_F)$ is
compact and $f\not\equiv 0$ is a solution of \eqref{eq:skid}, then
the scalar curvature $\tau_{g_F}$ is constant (see
\cite{Corvino2000} also). So, by \eqref{eq:traceadjoint-lsc} and
the well known results about the spectrum of the Laplace-Beltrami
operator on a \textit{compact Riemannian manifold without
boundary} (see \cite{Berard1986}), there results:
%
\begin{equation}\label{eq:3.10compact}
\begin{split}
&\textrm{if }(F,g_F)\textrm{ is \textit{compact} and }f \in
C^\infty_{>0}
(F), \textrm{ then } f\textrm{ verifies }\eqref{eq:skid}\\
& \textrm{if and only if }f\textrm{ is constant and
}(F,g_F)\textrm{ is Ricci flat.}
\end{split}
\end{equation}

\bigskip

On the other hand, in \cite[Proposition 2.7]{Corvino2000} the
author proved that (see also \cite{Dobarro-Unal2004}):
\begin{equation}\label{eq:corvino}
\begin{split}
&f \not\equiv 0\textrm{ satisfies \eqref{eq:skid} if and only if the warped }\\
&\textrm{product metric }-f^2 dt^2+g_F\textrm{ is Einstein,}
\end{split}
\end{equation}
which is also easy to conclude from \eqref{eq:ricci-ssst}.

\bigskip

We remark also the importance of equation \eqref{eq:skid} in the
study of static Killing Initial Data (briefly \textit{static}
KIDs) in the recent articles of R. Beig, P. T. Chru\'{s}ciel, R.
Schoen, D. Pollack and F. Pacard
\cite{BeigChruscielSchoen2005,ChruscielPollack2008,ChruscielPacardPollack2008}.
\end{rem}

Now, under additional hypothesis about the sign of the Ricci
curvature, we give a \2partial" extension of the
Bourguignon/Fischer-Marsden result mentioned above (\cite[p.
228-230]{Fischer-Marsden1975-a} and \cite{Bourguignon1975}) to the
case where the involved manifold is complete but noncompact.
Notice that the next three statements are supported by the
Liouville type results of P. Li and S.-T. Yau on complete but
noncompact Riemannian manifolds.

\begin{thm}\label{thm:einstein-corvino}Let $(F,g_F)$ be a complete
Riemannian manifold without boundary where $\dim F \ge 2$. Suppose
that the Ricci curvature of $(F,g_F)$ is nonnegative. Given $f \in
C^\infty_{>0} (F)$, $f$ is a solution of \eqref{eq:skid} if and only
if $f$ is constant and $(F,g_F)$ is Ricci-flat.
\end{thm}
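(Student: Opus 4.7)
The plan is to reduce the PDE $\mathscr{L}_{g_F}^\ast f\equiv 0$ to a Liouville problem for positive superharmonic functions on a complete manifold of nonnegative Ricci curvature, and then back-substitute.

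The \emph{if} direction is immediate: when $f$ is constant and $(F,g_F)$ is Ricci-flat, every one of the three summands in $\mathscr{L}_{g_F}^\ast f=-f\,\Ric_{g_F}-\Delta_{g_F}f\,g_F+{\rm H}_{g_F}^f$ vanishes identically.

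For the converse, I first take the $g_F$-trace of $\mathscr{L}_{g_F}^\ast f=0$. Using $\trace_{g_F}\Ric_{g_F}=\tau_{g_F}$, $\trace_{g_F}g_F=s$ and $\trace_{g_F}{\rm H}_{g_F}^f=\Delta_{g_F}f$, this produces the scalar identity \eqref{eq:traceadjoint-lsc}, namely
\begin{equation*}
(s-1)\Delta_{g_F}f+\tau_{g_F}\,f=0.
\end{equation*}
Since $\Ric_{g_F}\ge 0$ forces $\tau_{g_F}\ge 0$, and $f>0$, this rearranges to $\Delta_{g_F}f=-\tfrac{\tau_{g_F}}{s-1}f\le 0$; thus $f$ is a positive superharmonic function on the complete Riemannian manifold $(F,g_F)$.

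The decisive step is then to invoke the Liouville-type theorem (due to Yau, via the Cheng--Yau gradient estimate; cf.\ \cite{Yau1976} and the exposition in \cite{Li2000,Li2004}): on a complete Riemannian manifold of nonnegative Ricci curvature, every positive superharmonic function is constant. Applied to our $f$, this yields $f\equiv c$ for some constant $c>0$. Feeding $f\equiv c$ back, $\Delta_{g_F}f=0$ and ${\rm H}_{g_F}^f=0$, so the traced identity collapses to $\tau_{g_F}\,c=0$, whence $\tau_{g_F}\equiv 0$; the original equation then reduces to $-c\,\Ric_{g_F}=0$, and since $c>0$ we conclude $\Ric_{g_F}\equiv 0$, i.e.\ $(F,g_F)$ is Ricci-flat.

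The main obstacle is not any algebraic manipulation but the precise form of the Liouville statement invoked: in the compact case of Bourguignon/Fischer--Marsden, constancy is obtained by integration against $1$, but without compactness one needs the Cheng--Yau gradient estimate (with the nonnegative Ricci hypothesis playing the role that compactness previously did). All other ingredients are routine tracings and the positivity of $f$.
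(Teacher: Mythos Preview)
Your argument has a genuine gap: the Liouville statement you invoke is false. It is \emph{not} true that every positive superharmonic function on a complete Riemannian manifold with nonnegative Ricci curvature is constant. Yau's theorem (and the Cheng--Yau gradient estimate) gives this conclusion only for positive \emph{harmonic} functions. For a counterexample to your version, take $(\mathbb{R}^s,g_0)$ with $s\ge 3$ and let $u$ be a mollification of the Newtonian potential $|x|^{2-s}$ (or any smooth positive function with $\Delta u\le 0$ that decays at infinity); such $u$ is positive, smooth, strictly superharmonic, and nonconstant. Equivalently, $\mathbb{R}^s$ for $s\ge 3$ is non-parabolic despite having $\Ric\equiv 0$. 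So from the traced identity $(s-1)\Delta_{g_F}f=-\tau_{g_F}f\le 0$ alone you cannot conclude that $f$ is constant.

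What your argument is missing---and what the paper's route supplies---is the observation that $\tau_{g_F}$ is automatically \emph{constant}. The paper passes through \eqref{eq:corvino}: $\mathscr{L}_{g_F}^\ast f\equiv 0$ with $f>0$ is equivalent to $-f^2\,dt^2+g_F$ being Einstein, say with constant $\lambda$. Evaluating the Einstein equation on $\partial_t$ (via \eqref{eq:ricci-ssst}) gives $\Delta_{g_F}f=-\lambda f$, and comparison with the trace identity forces $\tau_{g_F}=(s-1)\lambda$, a constant. Now the dichotomy is clean: if $\lambda=0$ then $f$ is a positive \emph{harmonic} function on a complete manifold with $\Ric_{g_F}\ge 0$, hence constant by Yau; if $\lambda>0$ then $f>0$ solves $-\Delta_{g_F}f=\lambda f$, which (by the standard positive-supersolution/Barta argument) forces the bottom of the spectrum $\lambda_1(-\Delta_{g_F})\ge\lambda>0$, contradicting Cheng's bound $\lambda_1=0$ for complete manifolds with $\Ric_{g_F}\ge 0$. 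In either case $f$ is constant and back-substitution gives $\Ric_{g_F}\equiv 0$, exactly as you wrote. The paper packages this last step as a citation to \cite[Corollary~4.9]{Dobarro-Unal2004}, but the point is that the Einstein reformulation is what upgrades ``superharmonic'' to ``harmonic (or eigenfunction with constant eigenvalue)'', which is precisely the leverage you need.
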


\begin{proof}
This is a consequence of \eqref{eq:corvino} and \cite[Corollary
4.9]{Dobarro-Unal2004}.
\end{proof}

\begin{cor}
Let $(F,g_F)$ be a complete Riemannian manifold without boundary
where $\dim F \ge 2$. Suppose that $(F,g_F)$ is not Ricci-flat.
Then \eqref{eq:skid} admits a nonconstant strictly positive
solution only if the Ricci curvature of $(F,g_F)$ is negative
somewhere.
\end{cor}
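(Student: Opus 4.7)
The ($\Leftarrow$) direction is immediate: if $f$ is a positive constant and $(F,g_F)$ is Ricci-flat, then $\Delta_{g_F} f$, $H_{g_F}^f$ and $\Ric_{g_F}$ all vanish, so every term on the right-hand side of \eqref{eq:adjoint-lsc} is zero.

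For ($\Rightarrow$), my plan is to leverage the equivalence \eqref{eq:corvino} to replace \eqref{eq:skid} by the assertion that the standard static warped metric $-f^2\,{\rm d}t^2 \oplus g_F$ is Einstein, say with constant $\lambda$. Separating $\Ric = \lambda g$ into the $\partial_t$ and $TF$ components via \eqref{eq:ricci-ssst} yields
\[
\Delta_{g_F} f = -\lambda f \qquad\text{and}\qquad \Ric_{g_F} - \tfrac{1}{f}\,H_{g_F}^f = \lambda g_F,
\]
and taking the $g_F$-trace of the second identity gives $\tau_{g_F} = (s-1)\lambda$. Since $\Ric_{g_F} \ge 0$ forces $\tau_{g_F} \ge 0$, this immediately pins down $\lambda \ge 0$.

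The strategy is then to dispose of the two remaining cases. If $\lambda = 0$, then $f$ is positive and harmonic on a complete Riemannian manifold with nonnegative Ricci curvature; Yau's Liouville theorem from \cite{Yau1976} (already cited in the proof of \emph{Corollary \ref{cor:sec-complete}}) forces $f$ to be constant, so $H_{g_F}^f \equiv 0$ and the Einstein identity collapses to $\Ric_{g_F} = 0$, as claimed. If instead $\lambda > 0$, then $f$ would be a strictly positive eigenfunction of $-\Delta_{g_F}$ with positive constant eigenvalue $\lambda$; by the Agmon/Moss--Piepenbrink principle this would force the bottom of the spectrum to satisfy $\lambda_1(-\Delta_{g_F}) \ge \lambda > 0$, contradicting the fact that a complete noncompact manifold with $\Ric_{g_F} \ge 0$ has $\lambda_1 = 0$ (via Cheng's eigenvalue comparison together with the Calabi--Yau infinite-volume property). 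The compact case of $\lambda > 0$ is even easier: integrating $\Delta_{g_F} f = -\lambda f$ over $F$ gives $\lambda \int_F f = 0$, hence $\lambda = 0$, reducing to the previous case.

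The main obstacle is ruling out $\lambda > 0$ in the complete noncompact setting; this is precisely the content packaged in \cite[Corollary 4.9]{Dobarro-Unal2004}, which is the authors' preferred way to close the argument, and which effectively substitutes for the spectral Liouville reasoning sketched above.
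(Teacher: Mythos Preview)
You have proved the wrong statement. Your $(\Leftarrow)/(\Rightarrow)$ structure and the content of the argument match \emph{Theorem \ref{thm:einstein-corvino}} (the ``if and only if'' characterization under the hypothesis $\Ric_{g_F}\ge 0$), not the present corollary. The corollary is a one-directional ``only if'' assertion with the standing hypothesis that $(F,g_F)$ is \emph{not} Ricci-flat; there is nothing here corresponding to your $(\Leftarrow)$ paragraph, and the hypothesis ``$\Ric_{g_F}\ge 0$'' that you use throughout your $(\Rightarrow)$ paragraph is not assumed in the corollary at all.

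The paper gives no separate proof of this corollary because it is an immediate contrapositive of \emph{Theorem \ref{thm:einstein-corvino}}: if the Ricci curvature were \emph{nowhere} negative, i.e.\ $\Ric_{g_F}\ge 0$, then by the theorem every $f\in C^\infty_{>0}(F)$ solving \eqref{eq:skid} would be constant and would force $(F,g_F)$ to be Ricci-flat, contradicting the standing hypothesis. Hence no nonconstant strictly positive solution can exist unless $\Ric_{g_F}$ is negative somewhere. That is the entire argument.

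What you have actually written is a correct (and more explicit) proof of \emph{Theorem \ref{thm:einstein-corvino}} itself. The paper proves that theorem in one line by invoking \eqref{eq:corvino} and \cite[Corollary 4.9]{Dobarro-Unal2004}; you instead unpack the Einstein condition into $\Delta_{g_F}f=-\lambda f$ and $\tau_{g_F}=(s-1)\lambda$, pin down $\lambda\ge 0$ from $\Ric_{g_F}\ge 0$, dispose of $\lambda=0$ via Yau's Liouville theorem, and rule out $\lambda>0$ by the Fischer-Colbrie--Schoen/Moss--Piepenbrink positivity principle together with Cheng's eigenvalue comparison. This is a legitimate and more self-contained alternative to the cited \cite[Corollary 4.9]{Dobarro-Unal2004}, but it belongs under the theorem, not here. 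For the corollary, simply invoke the theorem and take the contrapositive.
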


\begin{cor}
Let $(F,g_F)$ be a complete Riemannian manifold without boundary
of $\dim F \ge 2$ which is \2either compact or complete with
nonnegative Ricci curvature" and also let $M=I _f\times F$ be a
standard static space-time with the metric $g=-f^2{\rm d}t^2
\oplus g_F$, where $f \in C^\infty_{>0}(F)$. Then, the following
properties are equivalent:
\begin{enumerate}
    \item the energy-momentum tensor of $M$ and $\tau_{g_F}$ are
    identically zero,
    \item $f$ is a solution of \eqref{eq:skid},
    \item $f$ is a constant and $M$ is Ricci-flat,
    \item $\Ric_{g_F} \equiv 0$ and $Q_{g_F}^f \equiv 0$,
    \item $f$ is a constant and $\Ric_{g_F} \equiv 0$,
\end{enumerate}
where $\Ric_{g_F}, Q_{g_F}^f$ and $\tau_{g_F}$ are as above. In
such cases, DEC (and consequently WEC) is trivially verified,
indeed the energy-momentum tensor of $M$ results identically zero.
\end{cor}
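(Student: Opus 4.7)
The strategy is to establish a short cycle of implications among (2)--(5), then derive (1) as a consequence, using the identity $\mathscr{L}_{g_F}^\ast f = -f\Ric_{g_F} - Q_{g_F}^f$ from \eqref{eq:adjoint-lsc-Q}, the Ricci formula \eqref{eq:ricci-ssst}, the energy-momentum expression \eqref{eq:energy momentum ssst}, and the classification of solutions to $\mathscr{L}_{g_F}^\ast f \equiv 0$ provided by \eqref{eq:3.10compact} in the compact case and by \emph{Theorem \ref{thm:einstein-corvino}} in the complete noncompact case with $\Ric_{g_F}\ge 0$.

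First I would show (5)$\Rightarrow$(4)$\Rightarrow$(2). If $f$ is constant then $H_{g_F}^f=0$ and $\Delta_{g_F}f=0$, so $Q_{g_F}^f\equiv 0$; combined with $\Ric_{g_F}\equiv 0$ this gives (4). For (4)$\Rightarrow$(2) apply \eqref{eq:adjoint-lsc-Q} directly. Next, apply either \eqref{eq:3.10compact} or \emph{Theorem \ref{thm:einstein-corvino}} (depending on which hypothesis on $F$ is in force) to obtain (2)$\Rightarrow$(5): in both scenarios, $f\in C^\infty_{>0}(F)$ solving \eqref{eq:skid} forces $f$ to be constant and $(F,g_F)$ Ricci-flat. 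Finally, (5)$\Leftrightarrow$(3) is immediate from \eqref{eq:ricci-ssst}: if $f$ is constant and $\Ric_{g_F}\equiv 0$, then $\mathscr{L}_{g_F}^\ast f\equiv 0$ and $\Delta_{g_F}f=0$, so $\Ric\equiv 0$; conversely, constancy of $f$ together with Ricci-flatness of $M$ forces $\Ric_{g_F}\equiv 0$ by restricting \eqref{eq:ricci-ssst} to pairs in $\mathfrak{L}(F)$.

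It remains to close the loop with (1). From \eqref{eq:energy momentum ssst},
\begin{equation*}
8\pi\,\mathrm{T}(U_1+V_1,U_2+V_2)=-\frac{1}{f}\mathscr{L}_{g_F}^\ast f(V_1,V_2)-\frac{1}{2}\tau_{g_F}\,g(U_1+V_1,U_2+V_2).
\end{equation*}
If (1) holds, plugging $U_i=0$ and arbitrary $V_i\in\mathfrak{X}(F)$ gives $\mathscr{L}_{g_F}^\ast f\equiv 0$, so (2) holds. Conversely, if (5) is granted (which we already know is equivalent to (2)), then $\tau_{g_F}=\trace_{g_F}\Ric_{g_F}=0$ and the displayed formula immediately yields $\mathrm{T}\equiv 0$, giving (1). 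The closing assertion about the DEC then follows trivially, since an identically zero energy-momentum tensor satisfies $\mathrm{T}(\mathrm{w}_1,\mathrm{w}_2)=0\ge 0$ for every pair of future-pointing causal vectors (alternatively, invoke \emph{Corollary \ref{cor:ec-dec-skid}} with $\tau_{g_F}\equiv 0$).

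The only nontrivial step is the invocation of \emph{Theorem \ref{thm:einstein-corvino}} or \eqref{eq:3.10compact} for (2)$\Rightarrow$(5); all other implications are straightforward consequences of the defining formulas for $Q_{g_F}^f$, $\mathscr{L}_{g_F}^\ast f$ and the Ricci/energy-momentum tensors of the warped product. Since those classification results are exactly the hypotheses assumed on $(F,g_F)$, the argument presents no real obstacle beyond careful bookkeeping of which tensor identities supply which implication.
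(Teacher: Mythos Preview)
Your argument is correct and matches the paper's approach: the paper's proof is a one-line citation of \eqref{eq:3.10compact}, \textit{Theorem \ref{thm:einstein-corvino}}, \eqref{eq:energy momentum ssst} and \eqref{eq:ricci-ssst}, and you have simply spelled out the cycle of implications these ingredients supply. There is no substantive difference in strategy.
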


\begin{proof}
This is a consequence of \eqref{eq:3.10compact}, \textit{Theorem}
\ref{thm:einstein-corvino}, \eqref{eq:energy momentum ssst} and
\eqref{eq:ricci-ssst}.
\end{proof}

\section{Consequences and Examples}\label{sec:ConseqExamp}

In this section, unless otherwise stated, we will assume $\dim F =
s \geq 2$.

\subsection{Conformal Hyperbolicity}\label{subsec:ConfHyper}

Before we state our main results, we briefly recall the definition
the Lorentzian pseudo-distance on a Lorentzian manifold $(M,g_M)$
due to Markowitz and then recall some of its elementary properties
(see \cite{MM1} for further details).

\noindent Consider the open interval $(-1,1)$ furnished with the
Poincar\'{e} metric
\begin{equation*}\label{} {\rm
d}r^2_{(-1,1)}=\frac{{\rm d}u^2}{(1-u^2)^2}.
\end{equation*}
So the Poincar\'{e} distance between two points $u_0,u_1 \in
(-1,1)$ can be expressed as
\begin{equation*}
\rho(u_0,u_1) = \frac{1}{2} \left |\log \left(\frac{1+u_1}{1-u_1}
\frac{1-u_0}{1+u_0}\right) \right |.
\end{equation*}

\noindent Suppose $\gamma$ is a null pre-geodesic in $(M,g_M)$, i.e., $g(\gamma^\prime, \gamma^\prime)=0$ and $\gamma^{\prime
\prime}=\varphi \gamma^\prime,$ for some function $\varphi$. Then
there is a parameter, called the affine parameter, $r$ for which
the null pre-geodesic becomes a null geodesic. Indeed, $r$ is the
solution of the following ordinary differential equation
$\varphi=r^{\prime \prime}/r.$ In this case, a projective
parameter $p$ is defined to be a solution of
\begin{equation*}\label{}
\{p \,; r\}:=\frac{1}{2}\frac{p^{\prime \prime
\prime}}{p^\prime}-\frac{3}{4} \left(\frac{p^{\prime
\prime}}{p^\prime}\right)^2=-\frac{1}{n-2}
{\Ric_M}(\gamma^\prime(r),\gamma^\prime(r)),
\end{equation*}
where $r$ is an affine parameter for a null pre-geodesic of the
form given above, $\{p \,; r\}$ is the so called Schwarzian
derivative of $p$ respect to $r$ (see \cite{Sasaki1999}) and
$\Ric_M$ is the Ricci tensor of the Lorentzian manifold $(M,g_M)$.
The parameter $r$ is independent of the affine parameter along
$\gamma$.

\noindent A chain of null geodesic segments joining $p$ to $q$ is
\begin{itemize}
\item a sequence of points $p=p_0,p_1,\dots,p_k=q$ in $M,$
\item pairs of points $(a_1,b_1), \dots, (a_k,b_k)$ in $(-1,1)$
and
\item projective maps (i.e., a projective map is simply a null
geodesics with the projective parameter as the natural parameter)
$f_1,\dots,$ $f_k$ from $(-1,1)$ into $M$ such that
$f_i(a_i)=p_{i-1}$ and $f_i(b_i)=p_i$ for $i=1,\cdots,k.$
\end{itemize}
The length of such a chain is defined as $L(\alpha)= \sum_{i=1}^k
\rho(a_i,b_i)$.

\noindent By combining all the ingredients defined above, Markowitz
defines his version of intrinsic Lorentzian pseudo-distance
$d_M \colon M \times M \to [0,\infty)$ as
\begin{equation*}\label{} d_M(p,q):=\inf_{\alpha}L(\alpha),
\end{equation*}
where the infimum is taken over all the chains of null geodesic
segments $\alpha$ joining $p$ to $q$. Note that $d_M$ is indeed
a pseudo-distance. The Lorentzian manifold $(M,g_M)$ is called
\textit{conformally hyperbolic} when the Lorentzian
pseudo-distance $d_M$ is a true distance.

We now reproduce the statements of three theorems from \cite{MM1}
which will be useful later.

\medskip

\noindent \cite[\textsc{Theorem 5.1}]{MM1} Let $(M,g)$ be a null
geodesically complete Lorentz manifold. If $(M,g)$ satisfies the
curvature condition ${\rm Ric}_M(\mathrm{x},\mathrm{x}) \leq 0$
for all null vectors $\mathrm{x}$, then it has a trivial
Lorentzian pseudo-distance, i.e., $d_M \equiv 0.$

\medskip

\noindent \cite[\textsc{Theorem 5.8}]{MM1} Let $(M,g)$ be an
$n(\geq 3)$-dimensional Lorentzian manifold. If $(M,g)$ satisfies
the NCC and the null generic condition, briefly NGC, (i.e., ${\rm
Ric}(\gamma^\prime,\gamma^\prime) \neq 0,$ for at least one point
of each inextendible null geodesic $\gamma$) then, it is
conformally hyperbolic.

\medskip

\noindent \cite[\textsc{Theorem 7.1}]{MM1} The group of conformal
automorphisms of a conformally hyperbolic Lorentzian manifold
$(M,g)$ has a compact isotropy group at each point $p.$

\medskip

We will recall some examples in \cite{MM1}.
\begin{itemize}
\item Complete Einstein space-times (in particular, Minkowski,
de Sitter and the anti-de Sitter space-times) have trivial
Lorentzian pseudo-distances because of Theorem 5.1 of \cite{MM1}.
\item The Einstein static universe has also trivial Lorentzian
pseudo-distance since the space-times in the previous item can be
conformally imbedded in the Einstein static universe.
\item A Robertson-Walker space-time (i.e., an isotropic
homogeneous space-time) is conformally hyperbolic due to
\cite[Theorem 5.9]{MM1}.
\item The Einstein-de Sitter space $M$ is conformally hyperbolic
and for the null separated points $p$ and $q,$ the Lorentzian
pseudo-distance is given by $$d_M(p,q) = \frac{5}{4} \log
r(p,q),$$ where $r(p,q)$ denotes cosmological frequency ratio (see
\cite{MM2} for the explicit computation). The conformal distance
between two causally related events in the Einstein-de Sitter
space is given as $$d_M(x,y)=\frac{1}{2} \left| \log
\frac{\tau(x)}{\tau(y)}\right|,$$ where $M=(0,\infty) \times
_{t^{2/3}} \mathbb R^3$ with $g=-{\rm d}t^2 \oplus t^{4/3} {\rm
d}\sigma^2$ and $\tau=3t^{1/3}$ (see \cite[Theorem 5]{MM2}).
\end{itemize}

\bigskip

By applying Theorem \ref{thm:ec-t} and the previously restated results
of Markowitz, we obtain the theorems that follow.

\begin{thm} \label{main-12}
Let $M=\mathbb R _f\times F$ be a standard static space-time with
the metric $g=-f^2{\rm d}t^2 \oplus g_F.$ Suppose that
$\mathcal{L}_{g_F}^\ast f$ is positive semi-definite (this condition
is satisfied for example, if $\mathcal{R}ic_{g_F}$ and $\mathcal{Q}_{g_F}^f$
are negative semi-definite). If at least one of the following conditions is
verified
\begin{enumerate}
\item $(F,g_F)$ is compact,
\item $(F,g_F)$ is complete and  $0 < \inf f $,
\end{enumerate}
then the Lorentzian pseudo-distance $d_M$ on the standard static
space-time $(M,g)$ is trivial, i.e., $d_M \equiv 0.$
\end{thm}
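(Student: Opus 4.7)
The plan is to verify the two hypotheses of \cite[Theorem 5.1]{MM1}, whose conclusion is precisely $d_M \equiv 0$: namely, (a) null geodesic completeness of $(M,g)$, and (b) the reverse null convergence condition $\Ric_M(\mathrm{x},\mathrm{x})\leq 0$ for every null tangent vector $\mathrm{x}\in TM$. Condition (b) follows at once from part (1) of \emph{Theorem \ref{thm:ec-t}}, since the positive semi-definiteness of $\mathcal{L}_{g_F}^\ast f$ is equivalent to the RNCC on $M$. The parenthetical sufficient condition in the statement is immediate from \eqref{eq:adjoint-lsc-Q}: writing $\mathscr{L}_{g_F}^\ast f = -f\,\Ric_{g_F} - Q_{g_F}^f$ and recalling $f>0$, if both $\mathcal{R}ic_{g_F}$ and $\mathcal{Q}_{g_F}^f$ are negative semi-definite, then $\mathcal{L}_{g_F}^\ast f$ is positive semi-definite.

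The remaining task is to establish (a) in both cases. Case (1) is subsumed by case (2): compactness of $F$ together with $f\in C^\infty_{>0}(F)$ yields automatically that $(F,g_F)$ is complete and that $\inf f > 0$. For case (2), I would appeal to the classical sufficient conditions for causal geodesic completeness of standard static space-times contained in \cite{Allison1988-b} (see also \cite{RASM}). The underlying mechanism is that for any null geodesic $\gamma(s)=(t(s),\sigma(s))$ the Killing field $\partial_t$ supplies a first integral $f^2(\sigma)\,t'\equiv E\in\mathbb{R}$, and the null constraint $g(\gamma',\gamma')=0$ then gives $|\sigma'|_{g_F} = |E|/f(\sigma)$. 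Since $\inf f > 0$, both $|\sigma'|_{g_F}$ and $|t'|=|E|/f^{2}$ are uniformly bounded in $s$, so completeness of $(F,g_F)$ together with a standard Cauchy/Hopf--Rinow argument prevents $\sigma$ from escaping $F$ in finite affine parameter, while $t$ is plainly extendible to all of $\mathbb{R}$.

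Combining (a) with (b) and invoking \cite[Theorem 5.1]{MM1} yields $d_M\equiv 0$, which is the desired conclusion. The only genuinely non-routine step is the null geodesic completeness in case (2); I expect this to be the main obstacle, but it is precisely the kind of statement that the warped-product machinery in the cited references is designed to handle, so the argument should reduce to a clean application of those results rather than to a fresh completeness proof.
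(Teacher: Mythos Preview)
Your proposal is correct and follows essentially the same route as the paper: use \emph{Theorem \ref{thm:ec-t}}(1) to get the RNCC from the positive semi-definiteness of $\mathcal{L}_{g_F}^\ast f$, obtain null geodesic completeness from \cite[Theorem 3.12]{Allison1988-b} (the paper cites this directly for both cases rather than reducing (1) to (2)), and conclude via \cite[Theorem 5.1]{MM1}. Your extra sketch of the completeness mechanism is fine but not needed, since the cited result of Allison already covers both hypotheses.
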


\begin{proof}
By \textit{Theorem \ref{thm:ec-t}} and \eqref{eq:ricci-ssst}, the
hypothesis for $\mathcal{L}_{g_F}^\ast f$
%
%
being positive semi-definite implies 
the RNCC. In both cases the null geodesic completeness of the
underlying standard static space-times is a consequence of
\cite[Theorem 3.12]{Allison1988-b}. So, by \cite[Theorem
5.1]{MM1}, $\mathbb R _f\times F$ has a trivial Lorentzian
pseudo-distance, i.e., $d_M \equiv 0$.
\end{proof}

\begin{thm} \label{main-3}
Let $M=I _f\times F$ be a standard static space-time with the
metric $g=-f^2{\rm d}t^2 \oplus g_F$. Then, $(M,g)$ is conformally
hyperbolic if at least one of the following properties is
satisfied:
\begin{enumerate}
    \item $\mathcal{L}_{g_F}^\ast f$ is negative semi-definite
and the NGC is verified,
    \item $\mathcal{R}ic_{g_F}$ is positive semi-definite and $\mathcal{Q}_{g_F}^f$
is positive definite.
\end{enumerate}
\end{thm}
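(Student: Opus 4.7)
The plan is to reduce both cases to Markowitz's Theorem 5.8, which guarantees conformal hyperbolicity for an $n(\ge 3)$-dimensional Lorentzian manifold satisfying the NCC together with the NGC. So in each case I need to produce NCC and NGC for the standard static space-time $(M,g)$.

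For part (1), the NCC is immediate from item (1) of Theorem \ref{thm:ec-t}: negative semi-definiteness of $\mathcal{L}_{g_F}^\ast f$ is equivalent to the NCC. Since the NGC is an explicit hypothesis, Markowitz's Theorem 5.8 applies directly to yield conformal hyperbolicity. Nothing further is needed here.

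For part (2), the NCC follows from item (2) of Theorem \ref{thm:ec-t}, since positive definiteness of $\mathcal{Q}_{g_F}^f$ is in particular positive semi-definiteness and $\mathcal{R}ic_{g_F}$ is assumed positive semi-definite. The work to do is to verify the NGC, and this is where the strict positive definiteness of $\mathcal{Q}_{g_F}^f$ enters. Given an arbitrary nonzero null tangent vector $\mathrm{w}=U+V\in TM$ at a point of $M$, the null condition $g(\mathrm{w},\mathrm{w})=0$ combined with $f>0$ forces $V\neq 0$ (otherwise $\mathrm{w}=0$) and $f^2\,\mathrm{d}t^2(U,U)=g_F(V,V)$. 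Substituting this into \eqref{eq:ricci-ssst} and using the definition of $Q_{g_F}^f$ in \eqref{eq: quadratic form 1}, a short computation yields
\begin{equation*}
\Ric(\mathrm{w},\mathrm{w}) = \Ric_{g_F}(V,V) + \frac{1}{f}\,Q_{g_F}^f(V,V).
\end{equation*}
By hypothesis, $\Ric_{g_F}(V,V)\ge 0$ and $Q_{g_F}^f(V,V)>0$ (using $V\neq 0$ and positive definiteness), so $\Ric(\mathrm{w},\mathrm{w})>0$. This is considerably stronger than the NGC: the Ricci curvature is strictly positive along every nonzero null direction at every point of $M$, so in particular it is nonzero somewhere along each inextendible null geodesic. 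Applying Markowitz's Theorem 5.8 again finishes the proof.

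The main obstacle is really just the small verification that the spatial projection of a nonzero null vector must be nonzero and that the null-vector restriction collapses the $\mathrm{d}t^2$ contribution in \eqref{eq:ricci-ssst} into a multiple of $g_F(V,V)$, allowing the Hessian and Laplacian terms to recombine into $Q_{g_F}^f(V,V)$. Once that identity is in hand, both parts of the theorem are immediate consequences of the structural results already proved in Section \ref{sec:ECandI} combined with Markowitz's criterion.
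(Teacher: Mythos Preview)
Your proposal is correct and follows essentially the same approach as the paper: both cases are reduced to Markowitz's Theorem 5.8 via Theorem \ref{thm:ec-t}, with the positive definiteness of $\mathcal{Q}_{g_F}^f$ supplying the NGC in case (2). Your argument simply spells out in detail the computation (using \eqref{eq:ricci-ssst} and the null condition) that the paper compresses into the single phrase ``the positive definiteness of $\mathcal{Q}_{g_F}^f$ infers the NGC.''
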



\begin{proof}
As above, \textit{Theorem \ref{thm:ec-t}} and \eqref{eq:ricci-ssst},
together with the negative semi-definiteness of
$\mathcal{L}_{g_F}^\ast f$ or positive semi-definiteness of
\2$\mathcal{R}ic_{g_F}$ and $\mathcal{Q}_{g_F}^f$" imply the NCC.
In $(2)$, the positive definiteness of $\mathcal{Q}_{g_F}^f$
infers the NGC. Hence, in both cases all the assumptions of
\cite[Theorem 5.8]{MM1} are verified and $\mathbb R _f\times F$
results conformally hyperbolic.
\end{proof}

\subsection{Conjugate Points}\label{subsec:ConjugatePoints}

We now state some results relating the conformal hyperbolicity and
causal conjugate points of a standard static space-time by using
\cite{Blgl,Bcclc,Bsild,CE} and also \cite{BEE}. In \cite[Theorem
2.3]{CE}, it was shown that if the line integral of the Ricci
tensor along a complete causal geodesic in a Lorentzian manifold
is positive, then the complete causal geodesic contains a pair of
conjugate points.

Assume that $\gamma=(\alpha, \beta)$ is a complete causal geodesic
in a standard static space-time of the form $M=I _f\times F$ with
the metric $g=-f^2{\rm d}t^2 \oplus g_F.$ Then by using
$g(\gamma^\prime, \gamma^\prime) \leq 0$ and \eqref{eq:ricci-ssst}
we have,
\begin{equation*}
{\rm Ric}(\gamma^\prime, \gamma^\prime)=\displaystyle -\frac{1}{f}
\mathscr{L}_{g_F}^\ast f (\beta^\prime, \beta^\prime)
   -
\underbrace{g(\gamma^\prime,\gamma^\prime)}_{\leq 0}
\frac{1}{f}\Delta_{g_F} f.
\end{equation*}
We can easily state the following existence result for conjugate
points of complete causal geodesics in a conformally hyperbolic
standard static space-time by {\it Theorem \ref{main-3}} and
\cite[Theorem 2.3]{CE}.


\begin{cor} \label{cor-1} Let $M=I _f\times F$ be a standard
static space-time with the metric $g=-f^2{\rm d}t^2 \oplus g_F$.
Then, $(M,g)$ is conformally hyperbolic and any complete causal
geodesic in $(M,g)$ has a pair of conjugate points, if at least
one of the following properties is satisfied:
\begin{enumerate}
    \item $\mathcal{L}_{g_F}^\ast f$ is negative definite, $f$ is
    subharmonic 
and the NGC is verified,
    \item $\mathcal{R}ic_{g_F}$ is positive semi-definite and $\mathcal{Q}_{g_F}^f$
is positive definite.
\end{enumerate}
\end{cor}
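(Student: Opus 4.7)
The plan is to derive both conclusions from the Ricci formula in \emph{Proposition \ref{prp:ricci-ssst}}, by combining \emph{Theorem \ref{main-3}} (for conformal hyperbolicity) with the line-integral criterion for conjugate points of Chicone--Ehrlich \cite[Theorem 2.3]{CE}, which was already quoted just before the statement.

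First, conformal hyperbolicity is essentially automatic: in case $(1)$, negative definiteness of $\mathcal{L}_{g_F}^\ast f$ implies negative semi-definiteness and the NGC is a hypothesis, so \emph{Theorem \ref{main-3}}(1) applies; case $(2)$ matches the hypotheses of \emph{Theorem \ref{main-3}}(2) verbatim.

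Second, for the existence of a pair of conjugate points along a given complete causal geodesic $\gamma=(\alpha,\beta)$, the strategy is to show that $\int \Ric(\gamma',\gamma')\,d\tau > 0$ and invoke \cite[Theorem 2.3]{CE}. The key input is the identity displayed in the excerpt just before the statement, namely
\begin{equation*}
\Ric(\gamma',\gamma') \;=\; -\frac{1}{f}\,\mathscr{L}_{g_F}^{\ast} f(\beta',\beta') \;-\; \frac{1}{f}\,\Delta_{g_F} f \, g(\gamma',\gamma'),
\end{equation*}
with $f>0$ and $g(\gamma',\gamma')\le 0$. In case $(1)$, the first summand is $\ge 0$ by negative definiteness of $\mathcal{L}_{g_F}^\ast f$, with strict inequality whenever $\beta'\ne 0$, while the second is $\ge 0$ by subharmonicity; in case $(2)$, one uses instead the equivalent decomposition $\Ric(\gamma',\gamma')=\Ric_{g_F}(\beta',\beta')+\tfrac{1}{f}Q_{g_F}^f(\beta',\beta')-\tfrac{1}{f}\Delta_{g_F}f\,g(\gamma',\gamma')$, and all three terms are non-negative, with the middle one strictly positive when $\beta'\ne 0$ (and positive definiteness of $\mathcal{Q}_{g_F}^f$ forces $\Delta_{g_F} f>0$ via the trace computation of \emph{Remark \ref{rem:quadratic form}}).

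The main obstacle is verifying strict positivity of the integral rather than mere non-negativity, i.e.\ ruling out the degenerate possibility that the integrand is identically zero. This is handled by splitting on the causal character of $\gamma'$. If $\gamma$ is null, then $\beta'$ can never vanish, because $g(\gamma',\gamma')=0$ combined with $\beta'=0$ would give $-f^2(\alpha')^2=0$ and hence $\gamma'=0$; consequently the first summand above is strictly positive along the entire null geodesic in both cases (which also recovers the NGC internally). If $\gamma$ is timelike, the factor $-g(\gamma',\gamma')>0$ makes the second summand strictly positive wherever $\Delta_{g_F} f>0$, which in case $(2)$ holds everywhere, while in case $(1)$ the positivity of the first summand wherever $\beta'\ne 0$, together with the explicit NGC assumption, supplies at least one point where the integrand is positive. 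In either scenario $\int \Ric(\gamma',\gamma')\,d\tau>0$, and \cite[Theorem 2.3]{CE} delivers the required pair of conjugate points.
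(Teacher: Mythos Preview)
Your approach is exactly the one the paper takes: \emph{Theorem \ref{main-3}} for conformal hyperbolicity and \cite[Theorem 2.3]{CE} for the conjugate points, fed by the Ricci identity displayed just before the corollary. The paper in fact gives no further detail than that one-line reference, so your expanded argument is a faithful elaboration of the intended proof.

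There is, however, one slip in your case analysis. In the timelike sub-case of $(1)$ you write that ``the explicit NGC assumption supplies at least one point where the integrand is positive''. But the NGC is a hypothesis on \emph{null} geodesics only and says nothing about a timelike $\gamma$. The correct dichotomy is: either $\beta'$ is not identically zero, in which case negative definiteness of $\mathcal{L}_{g_F}^\ast f$ alone makes the first summand strictly positive on a nonempty open set of parameter values (so the integral is $>0$, no NGC needed); or $\beta'\equiv 0$, which forces $\beta\equiv p_0$ with $p_0$ a critical point of $f$ (from the geodesic equation $\nabla_{\partial_t}\partial_t = f\,\grad_{g_F} f$), and then the integrand collapses to $-\tfrac{1}{f(p_0)}\Delta_{g_F}f(p_0)\,g(\gamma',\gamma')$, for which subharmonicity yields only $\ge 0$, not $>0$. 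The paper's one-line proof does not treat this degenerate ``static observer'' geodesic either, so your write-up is not less complete than the original; but you should drop the appeal to NGC in the timelike case, since it does not do the work you are assigning it.
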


We will give an existence result for conjugate points of time-like
geodesics in a standard static space-time, which will also be
conformally hyperbolic because of \textit{Theorem \ref{main-3}} if
one replaces the positive semi-definiteness of
$\mathcal{Q}_{g_F}^f$ by the positive definiteness .

\noindent In the next corollary, $\mathbf{L}$ denotes the usual
time-like Lorentzian length and ${\rm diam}_\mathbf{L}$ denotes
the corresponding time-like diameter (see \cite[Chapters 4 and
11]{BEE}).

\begin{cor} \label{cor-3} Let $M=I _f\times F$ be a standard
static space-time with the metric $g=-f^2{\rm d}t^2 \oplus g_F$.
Suppose that $\mathcal{R}ic_{g_F}$ and $\mathcal{Q}_{g_F}^f$ are
positive semi-definite. If there exists a constant $c$ such that
$\displaystyle\frac{1}{f}\Delta_{g_F} f \geq c >0$, then
\begin{enumerate}
%
\item any time-like geodesic $\gamma \colon [r_1,r_2] \to M$ in $(M,g)$ with $\displaystyle{\mathbf
L}(\gamma) \geq \pi \sqrt{\frac{n-1}{c}}$ has a pair of conjugate
points,

\item for any time-like geodesic $\gamma \colon [r_1,r_2] \to M$ in
 $(M,g)$ with $\mathbf{L}(\gamma)
> \displaystyle{\pi\sqrt{\frac{n-1}{c}}}$,  $r=r_1$ is conjugate along $\gamma$ to some
$r_0 \in (r_1,r_2),$ and consequently $\gamma$ is not maximal,

\item if $I=\mathbb{R}$, $(F,g_F)$ is complete and $\sup f<\infty$,
 then $$\displaystyle{\rm diam}_\mathbf{L}(M,g) \leq
 \pi\sqrt{\frac{n-1}{c}}.$$
\end{enumerate}
\end{cor}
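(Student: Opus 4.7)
The plan is to reduce all three assertions to a single pointwise lower bound on the timelike Ricci curvature and then apply Hawking-type conjugate-point and diameter theorems from \cite{BEE} (Chapter 11). The main computation is a straightforward normalization argument; the only delicate input is the global-hyperbolicity hypothesis needed for part (3).

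\textbf{Step 1 (Timelike Ricci estimate).} First I would unpack the Ricci formula of \emph{Proposition \ref{prp:ricci-ssst}} on a \emph{unit} timelike vector $\mathrm{w}=\mathrm{u}+\mathrm{v}\in T_pM$ with $\mathrm{u}=a\,\partial_t\in TI$ and $\mathrm{v}\in TF$. The condition $g(\mathrm{w},\mathrm{w})=-1$ gives $f^2 a^2 = 1+g_F(\mathrm{v},\mathrm{v})$, which after plugging into \eqref{eq:ricci-ssst} and using the definition of $Q_{g_F}^f$ collapses to
\begin{equation*}
\Ric(\mathrm{w},\mathrm{w})
= \mathcal{R}ic_{g_F}(\mathrm{v})
+ \frac{\Delta_{g_F}f}{f}
+ \frac{1}{f}\,\mathcal{Q}_{g_F}^f(\mathrm{v}).
\end{equation*}
The positive semi-definiteness of $\mathcal{R}ic_{g_F}$ and $\mathcal{Q}_{g_F}^f$, together with the assumption $\Delta_{g_F}f/f\ge c>0$, then yields $\Ric(\mathrm{w},\mathrm{w})\ge c$ for every unit timelike $\mathrm{w}$, i.e. the Lorentzian Myers hypothesis with curvature constant $K=c/(n-1)$.

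\textbf{Step 2 (Conjugate points, parts (1) and (2)).} Apply the Lorentzian version of Myers' theorem (see \cite[Chapter 11]{BEE}): under the timelike Ricci lower bound $\Ric\ge (n-1)K$, any timelike geodesic of Lorentzian length at least $\pi/\sqrt{K}=\pi\sqrt{(n-1)/c}$ carries a pair of conjugate points, which proves (1); when the length strictly exceeds this bound, the initial endpoint is conjugate along $\gamma$ to some interior parameter value, and hence by the standard first-variation/maximality characterization $\gamma$ fails to be maximal, which proves (2).

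\textbf{Step 3 (Diameter bound, part (3)).} The additional hypotheses $I=\mathbb{R}$, $(F,g_F)$ complete and $\sup f<\infty$ are precisely those guaranteeing that the standard static spacetime $(M,g)$ is globally hyperbolic (and in particular causally geodesically complete) by Allison's criteria \cite{Allison1988-b}. Under global hyperbolicity any two chronologically related points are joined by a maximizing timelike geodesic whose length equals their Lorentzian distance; if the timelike diameter exceeded $\pi\sqrt{(n-1)/c}$, such a maximizer would have length strictly greater than this bound, contradicting (2). Hence $\mathrm{diam}_{\mathbf{L}}(M,g)\le \pi\sqrt{(n-1)/c}$.

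The main obstacle is not the pointwise Ricci computation (essentially bookkeeping) but the verification that the hypotheses in (3) really do furnish global hyperbolicity together with the existence of timelike length-maximizers; once that is quoted from Allison and \cite{BEE}, the rest is automatic.
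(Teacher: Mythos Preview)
Your proof is correct and follows essentially the same route as the paper: establish the unit-timelike Ricci lower bound $\Ric(\mathrm{w},\mathrm{w})\ge c$ from \eqref{eq:ricci-ssst} and the semi-definiteness hypotheses, then invoke \cite[Propositions 11.7, 11.8 and Theorem 11.9]{BEE} together with Allison's global-hyperbolicity criterion for part (3). The only minor discrepancy is bibliographic: the paper cites \cite[Corollary 3.17]{Allison1985} (the thesis) rather than \cite{Allison1988-b} for the global-hyperbolicity statement under $I=\mathbb{R}$, $(F,g_F)$ complete and $\sup f<\infty$.
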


\begin{proof}
First of all we observe that \eqref{eq:ricci-ssst} implies
\begin{equation}\label{eq:BEEp404}
\Ric(\mathrm{u} + \mathrm{v},\mathrm{u} + \mathrm{v}) \geq
\frac{\Delta_{g_F} f}{f} \geq c
> 0,
\end{equation}
for any unit time-like tangent vector $\mathrm{u} + \mathrm{v}$ on
$M$, where $\mathrm{u} \in TI$ and $\mathrm{v} \in TF$.
%
%
\begin{enumerate}
    \item It is an immediate consequence of \eqref{eq:BEEp404}
    and \cite[Proposition 11.7]{BEE}.
    \item It is an immediate consequence of \eqref{eq:BEEp404}
    and \cite[Proposition 11.8]{BEE}.
    \item In \cite[Corollary 3.17]{Allison1985} the author proves
    ``if $(F,g_F)$ is a complete Riemannian manifold and
    $\sup f<\infty$, then $\mathbb{R} _f\times F$ is globally
    hyperbolic". So by applying \eqref{eq:BEEp404} and
    \cite[Theorem 11.9]{BEE}, (3) is obtained.
\end{enumerate}
\end{proof}

\begin{rem} $ $

\begin{itemize}
\item [{\bf (i)}]
Notice that Corollary \ref{cor-3} holds if the positive
semi-definiteness of $\mathcal{R}ic_{g_F}$ and
$\mathcal{Q}_{g_F}^f$ is replaced by the negative
semi-definiteness of $\mathcal{L}_{g_F}^\ast f$.
\item [{\bf (ii)}]
On the other hand, by using \cite[\textsc{Theorem 7.1}]{MM1}, one
can also deduce that the group of conformal automorphisms of the
the underlying standard static space-time has a compact isotropy
group at each point $p$ when the hypothesis in {\it Theorem
\ref{main-3}} or {\it Corollary \ref{cor-1}} is verified.
\end{itemize}
\end{rem}

\begin{rem}\label{rem:compact fiber 1} We observe that in
{\it Theorems \ref{thm:ec-t}} and {\it \ref{main-3}} and {\it
Corollaries \ref{cor-1}
} and {\it \ref{cor-3}}, when the semi-definite positiveness of
$\mathcal{Q}_{g_F}^f$ is considered, necessarily $\Delta_{g_F} f
\ge 0$ (i.e., the function $f$ is sub-harmonic) if $\dim F \geq
2$ (see Remark \ref{rem:quadratic form}). Consequently, in all
these statements, if the Riemannian manifold $(F,g_F)$ is compact
of dimension at least $2$, then $f$ turns out to be a positive
constant (see \textit{Remark \ref{rem:quadratic form}}).

\noindent Thus, the case of a noncompact Riemannian manifold
$(F,g_F)$ is particularly relevant. In order to show that the
considered assumptions are nonempty (see the discussion about the
existence sub-harmonic on complete Riemannian manifolds in
\textit{Subsection \ref{subsec:liouville}}), we will provide the
following simple example:
$$\mathbb{R}^s \hbox{ with the usual Euclidean metric } g_0$$
$$M=I \times \Omega , \Omega \subseteq \mathbb{R}^s \hbox{ open in }
\mathbb{R}^s \hbox{ with the induced metric } g_0$$
$$f:\Omega \rightarrow \mathbb{R}; f(x)=
\displaystyle\frac{1}{2}|x|_{g_0}^2+\epsilon > 0, \epsilon
>0$$
$$g=-f^2 {\rm d}t^2 + g_0$$
$${\rm H}_{\Omega}^f=  \, g_0
$$
$$\Delta_{\Omega} f\equiv  s >0$$
$$\Ric_{\Omega}\equiv 0$$

\noindent Thus, the assumptions of {\it Theorems \ref{thm:ec-t}}
and {\it \ref{main-3}} and {\it Corollary \ref{cor-1}}
are verified. Indeed,
\begin{itemize}
    \item[{\bf (i)}] For any $x \in \Omega,$ the
    quadratic form $-\mathcal{L}_{g_0}^\ast f = \mathcal{Q}^f_{\Omega}
    =(s-1) |\cdot|^2_{g_{0}}
    $
    is positive definite on $T_x \Omega=\mathbb{R}^s$.
    \item[{\bf (ii)}] $\Ric_{\Omega}\equiv 0$.
\end{itemize}
If we suppose that $\Omega$ is bounded, then the assumptions of
\textit{Corollary \ref{cor-3}} are verified with an exception of
item $3$ (i.e., the completeness of $(F,g_F)$). In this case,
there exists a constant $c>0$ such that
$$\displaystyle
\displaystyle\frac{1}{f}\Delta_\Omega
f=\frac{s}{\displaystyle\frac{1}{2}|x|^2+\epsilon} \ge c
>0.$$
\end{rem}

\subsection{Some special examples} \label{subsec:special examples}
Now, we will concentrate our attention on some special families of
metrics that allow an easier control of the sign of
$\mathcal{Q}_{g_F}^f$. For these metrics, there is a strong
relation between the Riemannian metric $g_F$ and the function $f
\in C^\infty_{>0}(F)$.

\medskip

``\emph{Concircular scalar fields}". We follow the terminology
used in \cite{Tashiro65}, i.e., we will call a nonconstant scalar
field $u$ on $F$ as a \textit{concircular scalar field} if it
satisfies the equation
\begin{equation}\label{eq:concircular fields 1}
    {\rm H}_{g_F}^u=\phi g_F,
\end{equation}
where $\phi$ is a scalar field called the \textit{characteristic
function} of $u$.

Notice that in this situation, taking the $g_F-$trace, we obtain
\begin{equation}\label{eq:concircular fields 2}
    \Delta_{g_F} u = \phi s
\end{equation}
and as a consequence,
\begin{equation}\label{eq:concircular fields 3}
    Q_{g_F}^u  = (s-1)\phi g_F.
\end{equation}
Thus, being $\dim F \geq 2$, the sign of $\mathcal{Q}_{g_F}^u$ is
determined by $\sign \phi$.

We observe that equation \eqref{eq:concircular fields 2} admits a
nonconstant solution with a smooth function $\phi$ of constant
sign only if the Riemannian manifold $(F,g_F)$ is noncompact.
Thus, there exists no concircular scalar field with a
characteristic function of constant sign on a compact Riemannian
manifold.

Thus, applying \textit{Theorem \ref{main-12}} and \textit{Theorem
\ref{main-3}}, respectively,  we state the results that follow.
\begin{cor}\label{cor:concircular fields 1}
Let $(F,g_F)$ be such that $\mathcal{R}ic_{g_F}$ is negative
semi-definite. Suppose that $(F,g_F)$ admits a positive
concircular scalar field $u$ with a nonpositive characteristic
function $\phi$. If $(F,g_F)$ is complete and $0 < \inf u $, then
the Lorentzian pseudo-distance $d_M$ on the standard static
space-time $M=I _u\times F$ with the metric $g=-u^2{\rm d}t^2
\oplus g_F$ is trivial, i.e., $d_M \equiv 0.$
\end{cor}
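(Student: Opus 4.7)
The plan is to reduce the corollary to a direct application of \textit{Theorem \ref{main-12}} with the choice $f=u$. What needs to be verified is that $\mathcal{L}_{g_F}^{\ast}u$ is positive semi-definite; once that is in hand, completeness of $(F,g_F)$ together with $0<\inf u$ gives the conclusion immediately.

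The key computation is to exploit the special form of $Q_{g_F}^{u}$ afforded by the concircularity of $u$. Since $u$ is a concircular scalar field with characteristic function $\phi$, equation \eqref{eq:concircular fields 3} yields
\begin{equation*}
Q_{g_F}^{u} = (s-1)\,\phi\, g_F.
\end{equation*}
Because $\phi\le 0$ and $s\ge 2$, the associated quadratic form satisfies $\mathcal{Q}_{g_F|p}^{u}(\mathrm{v}) = (s-1)\phi(p)\, g_F(\mathrm{v},\mathrm{v}) \le 0$ for every $p\in F$ and every $\mathrm{v}\in T_pF$; that is, $\mathcal{Q}_{g_F}^{u}$ is negative semi-definite.

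Next I would invoke the identity \eqref{eq:adjoint-lsc-Q}, namely
\begin{equation*}
\mathscr{L}_{g_F}^{\ast}u = -u\,\Ric_{g_F} - Q_{g_F}^{u}.
\end{equation*}
Since $u>0$ and $\mathcal{R}ic_{g_F}$ is negative semi-definite by hypothesis, the first summand $-u\,\Ric_{g_F}$ yields a positive semi-definite quadratic form. Combined with $-\mathcal{Q}_{g_F}^{u}\ge 0$ from the previous step, we deduce that $\mathcal{L}_{g_F}^{\ast}u$ is positive semi-definite at every point of $F$.

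Finally, the completeness of $(F,g_F)$ together with $\inf u>0$ places us precisely in case $(2)$ of the hypotheses of \textit{Theorem \ref{main-12}} (applied with $f=u$), so that the standard static space-time $M=I_{u}\times F$ has trivial Lorentzian pseudo-distance, $d_M\equiv 0$. There is no genuine obstacle here: the only content of the argument is the observation that concircularity converts the sign hypothesis on $\phi$ into a sign statement on $\mathcal{Q}_{g_F}^{u}$, at which point Theorem \ref{main-12} does all the work.
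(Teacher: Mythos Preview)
Your proof is correct and follows essentially the same route as the paper's: the concircularity identity \eqref{eq:concircular fields 3} gives $\mathcal{Q}_{g_F}^{u}=(s-1)\phi\,g_F\le 0$, which together with the negative semi-definiteness of $\mathcal{R}ic_{g_F}$ places you exactly in the parenthetical sufficient condition of \textit{Theorem \ref{main-12}}. The paper simply says the corollary follows by applying \textit{Theorem \ref{main-12}}, while you additionally unpack the verification of positive semi-definiteness of $\mathcal{L}_{g_F}^{\ast}u$ via \eqref{eq:adjoint-lsc-Q}; the substance is the same.
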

\begin{cor}\label{cor:concircular fields 2}
Let $(F,g_F)$ be such that $\mathcal{R}ic_{g_F}$ is negative
semi-definite. If $(F,g_F)$ admits a positive concircular scalar
field $u$ with a positive characteristic function $\phi$, then the
standard static space-time $M=I _u\times F$ with the metric
$g=-u^2{\rm d}t^2 \oplus g_F$ is conformally hyperbolic.
\end{cor}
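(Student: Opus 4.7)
The plan is to verify the hypotheses of \textit{Theorem \ref{main-3}} for the standard static space-time $M = I_u \times F$, exploiting the special structure that concircular scalar fields impose on $Q_{g_F}^u$.

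First, I would exploit the defining identity $H_{g_F}^u = \phi\,g_F$ of the concircular scalar field. Taking the $g_F$-trace, one obtains $\Delta_{g_F} u = s\phi$ as in \eqref{eq:concircular fields 2}, and substituting into the definition of $Q_{g_F}^u$ gives
\[
Q_{g_F}^u \;=\; \Delta_{g_F} u\, g_F - H_{g_F}^u \;=\; (s-1)\phi\, g_F,
\]
exactly as recorded in \eqref{eq:concircular fields 3}. Since $\phi > 0$ everywhere and $s = \dim F \geq 2$, the associated quadratic form $\mathcal{Q}_{g_F}^u$ is pointwise positive definite.

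Next, I would combine this with the Ricci hypothesis on $(F,g_F)$ to invoke \textit{Theorem \ref{main-3}}. The positive definiteness of $\mathcal{Q}_{g_F}^u$ established above is precisely what the second sufficient condition in that theorem demands; equivalently, one can argue via the first sufficient condition by observing from \eqref{eq:adjoint-lsc-Q} that
\[
\mathscr{L}_{g_F}^\ast u \;=\; -u\,\Ric_{g_F} \;-\; (s-1)\phi\, g_F,
\]
and that on a null vector of the form $\mathrm{w} = \pm u^{-1}\sqrt{g_F(\mathrm{v},\mathrm{v})}\,\partial_t + \mathrm{v}$ the Ricci formula \eqref{eq:ricci-ssst} reduces to $\Ric(\mathrm{w},\mathrm{w}) = -u^{-1}\mathcal{L}_{g_F}^\ast u(\mathrm{v},\mathrm{v})$, which inherits the strict sign contributed by $-(s-1)\phi\,g_F$ and thereby furnishes both the NCC and the NGC required by \textit{Theorem \ref{main-3}(1)}. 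Either route yields the conformal hyperbolicity of $(M,g)$.

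The heart of the argument is thus the reduction $Q_{g_F}^u = (s-1)\phi\,g_F$, which collapses the definiteness question to a one-line sign check on $\phi$; there is essentially no obstacle beyond sign bookkeeping. The corollary is, in this sense, the natural companion to \textit{Corollary \ref{cor:concircular fields 1}}: there the opposite sign on $\phi$ forces $\mathcal{L}_{g_F}^\ast u$ to be positive semi-definite and one applies \textit{Theorem \ref{main-12}} to obtain a trivial Lorentzian pseudo-distance, whereas here the positive sign of $\phi$ triggers the conformal hyperbolicity mechanism of \textit{Theorem \ref{main-3}} instead.
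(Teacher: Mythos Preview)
Your approach---reduce $Q_{g_F}^u$ to $(s-1)\phi\,g_F$ via the concircular identity and then invoke \textit{Theorem~\ref{main-3}}---is exactly the paper's one-line argument (``applying Theorem~\ref{main-3}'').

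However, you have glossed over a sign discrepancy. The corollary's hypothesis is that $\mathcal{R}ic_{g_F}$ is \emph{negative} semi-definite, whereas \textit{Theorem~\ref{main-3}(2)} requires it to be \emph{positive} semi-definite; so your appeal to condition~(2) does not go through as written. Your alternative route via condition~(1) fails for the same reason: with $\mathcal{R}ic_{g_F}\le 0$ and $u>0$, the term $-u\,\Ric_{g_F}$ in
\[
\mathscr{L}_{g_F}^\ast u \;=\; -u\,\Ric_{g_F}\;-\;(s-1)\phi\,g_F
\]
is positive semi-definite, while $-(s-1)\phi\,g_F$ is negative definite, so $\mathcal{L}_{g_F}^\ast u$ is in general indefinite and you cannot conclude the NCC, let alone the NGC. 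Your sentence ``inherits the strict sign contributed by $-(s-1)\phi\,g_F$'' is therefore unjustified.

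This mismatch is almost certainly a typo in the paper's statement of the corollary (the hypothesis was likely carried over verbatim from \textit{Corollary~\ref{cor:concircular fields 1}}, and every subsequent application in \S\ref{subsec:special examples}---Hessian manifolds, open subsets of $(\mathbb{R}^s,g_0)$---is Ricci-flat, where the distinction is vacuous). With the hypothesis amended to ``$\mathcal{R}ic_{g_F}$ positive semi-definite'', your invocation of \textit{Theorem~\ref{main-3}(2)} is correct and complete, and matches the paper's intended argument.
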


\medskip

``\emph{Hessian manifolds}". This type of manifolds are closely
related to the previously defined concircular scalar fields. We
begin by recalling the definition of Hessian manifolds (see some
recent articles about this kind of manifolds such as
\cite{{CV},{D},{Shima1995},{SY},{Totaro2004}}, and in particular,
the recent book by H. Shima \cite{Shima2007}).

First of all, we remark a notational matter: we say that a
connection $D$ on a manifold $N$ is \emph{S-flat} if its
torsion and curvature tensor vanish identically. We also
say that a manifold $N$ endowed with an \emph{S-flat}
connection $D$ is an \emph{S-flat} manifold and we denote
it by $(N,D)$. Notice that in \cite{Shima2007}, \emph{S-flat}
manifolds are called flat. On the other hand, according to our
convention, the concept of flatness only means that the curvature
tensor vanishes identically.

\smallskip

A Riemannian metric $h$ on an \emph{S-flat} manifold $(N,D)$ is
said to be a locally (respectively, globally) Hessian metric if
$h$ is locally (respectively, globally) expressed by the Hessian,
i.e., $h=Ddu$, where $u$ is a local (respectively, global) smooth
function on $N$. Such pair $(D,h)$ is called a locally
(respectively, globally) Hessian structure on $N$, and $u$ is said
to be a local (respectively, global) potential of $(D,h)$. A
manifold $N$ provided with a locally (respectively, globally)
Hessian structure is called a locally (respectively, globally)
Hessian manifold and is denoted by $(N,D,h)$.

\noindent If it is necessary, we will say that $N$ is $u-$globally
Hessian to indicate the specific function $u \in C^\infty(N)$ such
that $h=Ddu$ on the entire manifold $N$.

\noindent It is clear that a global Hessian manifold is also a
local one.

\noindent In \cite{Shima2007}, it is proved that if a Riemannian
manifold $(N,h)$ is flat with respect to the Levi-Civita
connection $\nabla$, then it is also a locally Hessian manifold
with the Hessian structure $(\nabla, h)$. Notice that for the
Levi-Civita connection both concepts, i.e., flatness and
\emph{S-flatness} coincide.

\noindent Moreover, in \cite{Shima2007} several important examples
of local and global Hessian structures are provided. Here, we
mention more classical and well known one, namely $\mathbb{R}^s$
furnished with the usual canonical flat connection. More
explicitly, $\mathbb{R}^s$ is a flat $u-$globally Hessian manifold
where $u(x)= \displaystyle\frac{1}{2}|x|_{g_0}^2$, where $g_0$ is
the canonical Euclidean metric on $\mathbb{R}^s$.

\noindent Let $(N,h)$ be a Riemannian manifold with the
Levi-Civita connection $\nabla$. Then, $(\nabla,h)$ is a globally
Hessian structure on $N$ with positive global potential if and
only if $(N,h)$ is flat and admits a positive concircular scalar
field with characteristic function $\phi\equiv 1$. In this case,
the positive concircular scalar field can be taken as the positive
global potential, and viceversa. Notice that when we say
$(\nabla,h)$ is a globally Hessian structure on $N$ with positive
global potential, this particularly implies that the global
potential is nonconstant (since $h$ is a metric).
Thus, applying \textit{Corollary \ref{cor:concircular fields 2}}
results:
\begin{cor}\label{cor:hessian fiber}
Let $(F,g_F)$ be a $u-$globally Hessian manifold
with the Hessian structure $(\nabla^{g_F},g_F)$, where the global
potential $u \in C^\infty_{>0}(F)$ and $\nabla^{g_F}$ denotes the
Levi-Civita connection of $g_F$. Then, the standard static
space-time of the form $M=I _u\times F$ with the metric
$g=-u^2{\rm d}t^2 \oplus g_F$ is conformally hyperbolic.
\end{cor}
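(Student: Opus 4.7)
The plan is to reduce Corollary \ref{cor:hessian fiber} to a direct application of \textit{Corollary \ref{cor:concircular fields 2}}. The two hypotheses of that corollary are: (i) $\mathcal{R}ic_{g_F}$ is negative semi-definite, and (ii) $F$ admits a positive concircular scalar field $u$ with a positive characteristic function $\phi$. The task is to verify both from the globally Hessian structure.

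First I would unwind the definition. Saying that $(\nabla^{g_F}, g_F)$ is a Hessian structure with potential $u \in C^\infty_{>0}(F)$ means exactly that $g_F = \nabla^{g_F} d u$, i.e.\ that the Hessian tensor of $u$ computed with the Levi-Civita connection satisfies
\begin{equation*}
    H_{g_F}^u = g_F.
\end{equation*}
Comparing with \eqref{eq:concircular fields 1}, this identifies $u$ as a positive concircular scalar field on $(F,g_F)$ with characteristic function $\phi \equiv 1 > 0$. In particular $u$ is nonconstant, since otherwise $g_F$ would vanish, contradicting the fact that $g_F$ is a Riemannian metric.

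Next I would use the other piece of the Hessian hypothesis, namely that $\nabla^{g_F}$ is \emph{S-flat}. Since the Levi-Civita connection always has vanishing torsion, \emph{S-flatness} collapses to the vanishing of the Riemann curvature tensor, so $(F,g_F)$ is flat in the usual sense. This immediately gives $\Ric_{g_F} \equiv 0$, so $\mathcal{R}ic_{g_F}$ is both positive and negative semi-definite; in particular, hypothesis (i) of \textit{Corollary \ref{cor:concircular fields 2}} is satisfied.

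With both conditions in place, \textit{Corollary \ref{cor:concircular fields 2}} applied to the concircular scalar field $u$ (with warping function also equal to $u$) yields that the standard static space-time $I_u \times F$ with metric $g = -u^2\, dt^2 \oplus g_F$ is conformally hyperbolic, which is the desired conclusion. The argument is essentially bookkeeping: no genuine obstacle arises, since the only non-trivial step is recognizing that the Levi-Civita \emph{S-flat} condition forces Ricci flatness, which is what lets the negative semi-definiteness assumption of \textit{Corollary \ref{cor:concircular fields 2}} hold vacuously.
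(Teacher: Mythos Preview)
Your proof is correct and follows exactly the paper's approach: the paragraph preceding \textit{Corollary \ref{cor:hessian fiber}} already records that a globally Hessian structure $(\nabla^{g_F},g_F)$ with positive potential is equivalent to $(F,g_F)$ being flat (hence Ricci-flat) together with $u$ being a positive concircular scalar field with characteristic function $\phi\equiv 1$, after which the paper simply invokes \textit{Corollary \ref{cor:concircular fields 2}}. Your write-up just makes these two verifications explicit.
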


\begin{rem}
\label{rem:general rem conf hyp} We will note some facts related
to the previous statements and examples of this section.
\begin{itemize}
\item [{\bf (i)}]Let now $(D,h)$ be a locally Hessian structure on
a manifold $N$ of dimension $s$ and $\nabla $ be the Levi-Civita
connection associated to $h$. In \cite{SY}, necessary and
sufficient conditions are given to have $\nabla = D,$ when $N$ is
a compact manifold without boundary.

\noindent Note that in this case, i.e., $h=Ddu=\nabla d u$, we
necessarily have $\Delta u = s.$ Indeed, it is sufficient to take
the $h-$trace of the both sides.

\noindent Furthermore, if the Hessian structure is $u-$global, then
$u$ is a function with Laplacian of constant sign on a compact
Riemannian manifold. Hence, $u$ is a constant (see \textit{Remark
\ref{rem:quadratic form}}).

\noindent In particular, this shows that the relevant cases for
\textit{Corollary \ref{cor:hessian fiber}} are those in which
$(F,g_F)$ is assumed to be noncompact. Indeed, if it is compact,
then the assumptions of the last corollary are not verified.

\noindent One simple example of this situation is the Riemannian
manifold (i.e., globally Hessian manifold) $(F,g_F)=(\Omega,g_0)$
where $\Omega$ is an open subset of $\mathbb{R}^s$ considered in
\textit{Remark \ref{rem:compact fiber 1}}.

\noindent If we further add the completeness assumption for $(F,g_F)$
to the hypothesis of \textit{Corollary \ref{cor:hessian fiber}}, then
$(F,g_F)$ becomes an Euclidean space. Indeed, this is a consequence
(for instance) of \cite[Theorem 2 (I,B)]{Tashiro65}.

\item [{\bf (ii)}] If $(F,g_F)$ is a complete Riemannian
manifold of dimension $1$ and $u \in C^\infty_{>0}(F)$ verifying
$0 < \inf u $, then the Lorentzian pseudo-distance $d_M$ on a
standard static space-time of the form $M=I _u\times F$ with
metric $g=-u^2{\rm d}t^2 \oplus g_F$ is trivial i.e., $d_M \equiv
0.$ Indeed, $\dim F = s = 1$ implies that $Q_{g_F}^u \equiv 0$ (by
the definition of $Q_{g_F}^u$), so we can directly apply
\textit{Theorem \ref{main-12}}.

\noindent By means of analogous arguments, the hypothesis stating
that ``\textit{$u$ is a concircular scalar field}" in \textit{Corollary
\ref{cor:concircular fields 1}} turns out to be irrelevant when
the dimension of the fiber is $1$.

\item [{\bf (iii)}] By considering the relation \eqref{eq:concircular fields 3},
we can notice that relevant examples for \textit{Corollaries
\ref{cor:concircular fields 1}} and \textit{\ref{cor:concircular
fields 2}} correspond to the case of having a Riemannian part of
dimension at least $2$.

\noindent For instance, in $\mathbb{R}^n$ with the usual metric
$g_0 ={\rm Id}_{\mathbb{R}^n}$, the existence of a concircular
scalar field, i.e., ${\rm H}^u_{\mathbb{R}^n}=\phi g_0$, implies
that $u(x)= \sum_{i=1}^n \xi_i(x_i)$
and hence,
$\partial_{ii}\xi_i(x_i) = \partial_{jj}\xi_j(x_j) = \phi(x)$ for
any $i \in \{1,\cdots,n\}-\{j\}$. Thus, $\phi = \phi_0$ is
constant and $\xi_i(x_i)= a_i x_i^2 + b_i x_i + c_i$, where
$\phi_0=2a_i$ for any $i$ with the coefficients $a_i, b_i, c_i \in
\mathbb R.$


\noindent Finally, we will mention some simple examples where the
suppositions of Corollaries \ref{cor:concircular fields 2} and
\ref{cor:concircular fields 1} are not satisfied. Let
$(\mathbb{R}^2,g_0)$ be the usual Euclidean plane and $\Omega $
be an \emph{open} subset of $\mathbb{R}^2$. Then $(\Omega,g_0)$ is a
Ricci-flat Riemannian manifold. Therefore:
\begin{enumerate}
 \item If $\Omega = \mathbb{R}^2$, then the hypothesis of
    \textit{Corollary \ref{cor:concircular fields 1}} cannot be
    verified. Indeed $(\mathbb{R}^2,g_0)$ is complete,
    but does not admit a positive concircular scalar field $u$ with
    a nonpositive characteristic function $\phi$ such that $0 < \inf u$.
    This follows from \eqref{eq:concircular fields 2} and either direct
    calculus or the classical Liouville nonexistence theorems for
    nonconstant superharmonic functions bounded from below on
    $(\mathbb{R}^2,g_0)$ (see \cite{Protter-Wienberger1984,Serrin-Zou2002}).
 \item If $\Omega = \{x \in \mathbb{R}^2: x_1>0, x_2>0\}$, then the hypothesis
    of \textit{Corollary \ref{cor:concircular fields 2}} is verified. For
    example, take $u(x_1,x_2)=x_1^2+x_2^2+x_1+x_2+1$ as a concircular scalar
    field with positive characteristic function $2$.
 \item For any $\Omega$, the hypothesis of \textit{Corollary
    \ref{cor:concircular fields 2}} is satisfied. For instance, take
    $u(x_1,x_2)=x_1^2+x_2^2+1$ as a concircular scalar field with positive
    characteristic function $2$. Notice that this concircular scalar field is
    different from the one given in (2).
\end{enumerate}
\end{itemize}
\end{rem}

\section{Conclusions}\label{sec:conclusions}

Our first goal is to obtain a family of necessary and/or
sufficient conditions for a set of energy and convergence
conditions on standard static space-times. We state a family of
results establishing a connection between the strong energy
condition and a family of Liouville type results for subharmonic
functions on the Riemannian part of a standard static space-time
too. We also obtain a set of results about the weak and dominant
energy conditions. All these results are based on suitable
hypothesis for the definiteness of the quadratic forms associated
to the Ricci tensor and the tensors $\mathscr{L}_{g_F}^\ast f$ and
$Q_{g_F}^f$ defined in \textit{Section \ref{sec:preliminaries}}.
Similar to the independent studies of Bourguignon, Fischer-Marsden or
Lafontaine, we observe the importance of the kernel of the operator
$\mathscr{L}_{g_F}^\ast$, not only in the study of standard static
space-times, but also in Riemannian geometry, more specifically in
the study of critical points of the scalar curvature map in any dimension.
About this matter, taking into account the sign of the Ricci curvature
and applying Liouville type results, we provide a statement about the
existence of positive functions in the kernel of the operator
$\mathscr{L}_{g_F}^\ast $ where the involved manifold is complete but
noncompact.

The second goal is to apply the previous results together with
several results of M. J. Markowitz and D. E. Allison to obtain
sufficient conditions for studying the conformal hyperbolicity
and the existence of conjugate points on causal geodesics on a
standard static space-time.

Finally, we show some examples and results relating the tensor
$Q_F^f$, conformal hyperbolicity, concircular scalar fields and
Hessian manifolds.

\pdfbookmark[1]{Bibliography}{bib}

\end{document}